\newcommand\abs[1]{\left|#1\right|}
\newcommand\norm[2]{{\left\|#1\right\|}_{#2}}
\newcommand{\bu}{\bar u}
\newcommand{\yd}{y^\delta}
\newcommand{\df}{\mathcal{D}(F)}
\newcommand{\umn}{u_{m,n}^{\alpha,\delta}}
\newcommand{\R}{{\,\mathbb{R}\,}}
\newcommand{\N}{{\,\mathbb{N}\,}}
\newcommand{\Rcal}{\mathcal{R}}
\newcommand{\Ccal}{{\,\mathcal{C}\,}}
\newcommand{\dom}{\mathcal{D}}
\newcommand{\ufett}{\boldsymbol{u}}
\newcommand\set[1]{\left\{ #1 \right\}}
\newcommand{\signum}{\mathop{sign}}
\newcommand{\av}{{\boldsymbol{\alpha}}}
\newcommand{\bve}{{\boldsymbol{\beta}}}
\newcommand{\etafett}{\boldsymbol{\eta}}
\newcommand{\xifett}{\boldsymbol{\xi}}
\newcommand{\Deltaxih}{\Delta\left(\frac{\boldsymbol{x}-\xifett_\av}{h_n} \right)}
\newtheorem{theorem}{Theorem}[section]
\newtheorem{lemma}[theorem]{Lemma}
\newtheorem{example}[theorem]{Example}
\newtheorem{proposition}[theorem]{Proposition}
\newtheorem{remark}[theorem]{Remark}
\newtheorem{assumption}[theorem]{Assumption}
\begin{document}

\title{Discretization of  variational regularization in Banach spaces}

\author{Christiane P\"oschl
\footnote{Dept. of Information \& Communication Technologies
Universitat Pompeu Fabra
C/ T\'anger 122-140,
08018 Barcelona, Spain},\,
Elena Resmerita \footnote{Department of Industrial Mathematics, Johannes Kepler University, Altenbergerstra\ss{}e~69, A-4040 Linz, Austria,
\emph{elena.resmerita@jku.at}},\, and Otmar Scherzer
\footnote{Computational Science Center, University of Vienna,
Nordbergstra\ss{}e~15, A-1090 Vienna, Austria and Radon Institute of Computational and Applied Mathematics,
Altenbergerstra\ss{}e~69, A-4040 Linz, Austria \emph{otmar.scherzer@univie.ac.at}}}

\maketitle
\renewcommand{\thefootnote}{\fnsymbol{footnote}}
\renewcommand{\thefootnote}{\arabic{footnote}}

\begin{abstract}

Consider a nonlinear ill-posed operator equation $F(u)=y$ where $F$ is defined on a Banach space $X$.
In general, for solving this equation numerically, a finite dimensional approximation of $X$ and an
approximation of $F$ are required. Moreover, in general the given data $\yd$ of $y$ are noisy.
In this paper we analyze finite dimensional variational regularization, which takes into account operator
approximations and noisy data: We show (semi-)convergence of the regularized solution of the finite dimensional
problems and establish convergence rates in terms of Bregman distances under appropriate sourcewise representation
of a solution of the equation. The more involved case of regularization in nonseparable Banach spaces is
discussed in detail. In particular we consider the space of finite total variation functions, the space of functions
of finite bounded deformation, and the $L^\infty$--space.

\textbf{Key words:} Ill-posed problem, Regularization, Bregman Distance, Strict Convergence
\end{abstract}

\section{Introduction}\bigskip

Let $F:X\rightarrow Y$ be a   nonlinear operator with domain $\df$, where $X$  is a Banach space
and $Y$ is a Hilbert space. We would like to approximate solutions of the ill-posed equation
\begin{equation}\label{nonlin-eq}
F(u)=y\,
\end{equation}
via variational regularization.

Let $\Rcal: X \to [0,+\infty]$ be a  penalty functional with nonempty domain $\dom (\Rcal)$.
An element $\bar u \in {\cal{D}}(F)\cap \dom(\Rcal)$ is called an \textit{$\Rcal$-minimizing solution of}
(\ref{nonlin-eq}) if it solves the constraint optimization problem
\begin{equation}\label{solution}
\min \Rcal(u) \mbox{ subject to } F(u)=y.
\end{equation}
We assume that noisy data $\yd$ are given such that
\begin{equation}\label{noise}
\norm{\yd-y}{} \leq \delta.
\end{equation}

In order to solve equation (\ref{nonlin-eq}) numerically, the space
$X$ has to be approximated by a sequence of finite dimensional
subspaces $X_n$. The situation when the spaces $X$ and $Y$ are
Hilbert and the regularization is quadratic has been analyzed in
\cite{Neu89}, \cite{PlaVai90} (linear problems) and \cite{NeuSch90},\cite{Qin99} (nonlinear
problems). However, recent advances in regularization theory deal
with general Banach spaces. Although convergence of regularization
methods in the general setting has been established (see, e.g., \cite{ResSch06}. \cite{HofKalPoeSch07}, a unifying
discretization approach is still not at hand. In comparison with the
Hilbert space theory a significant complication is due to the fact
that a non-separable Banach space cannot be approximated by a nested
sequences of finite dimensional subspaces, with respect to the norm
topology. We have in mind the case of the space of bounded variation
functions BV or of bounded deformation functions BD, which are not
separable. However, as mentioned in \cite[page 121]{AmbFusPal00} in
the context of BV, ``the norm topology is too strong for many
applications'', and in particular also when considering finite
dimensional approximations.

In this work, we show how ill-posed nonlinear equations can be approximated by solving associated finite dimensional convex regularization problems.
The case of nonseparable Banach spaces is particularly emphasized. We propose to approximate such spaces $X$ by subspaces with respect to a
topology which is weaker than the norm topology. Instead of the norm topology on $X$ (as in the separable Hilbert space or in the separable Banach space setting) we
use a metric on $X$, which requires to have available an adequate superspace of $X$. Our investigation covers a large class of
not necessarily separable Banach spaces which are frequently used for regularizing imaging and other inverse problems.

The paper is organized as follows. Section \ref{analysis} specifies the assumptions, shows well-posedness and convergence of the discretized
regularization method in the case that the Banach space $X$ is nonseparable and the regularization term is a not necessarily convex function. Also, convergence
rates with respect to Bregman distances are obtained in the convex regularization setting, under
a standard source condition. The finite dimensional approximation of solutions of the equation in separable Banach spaces $X$ is briefly
discussed in Section \ref{separable}, following the analysis done in Section \ref{analysis}. Section \ref{partic_nonsep} studies in some
detail the discretization of several relevant nonseparable Banach spaces such as the space of bounded variation functions, the space of bounded
deformation functions and the space of essentially bounded functions. The inverse ground water filtration problem is analyzed in the natural setting of
$L^{\infty}(\Omega)$, in Section \ref{water}.

\section{The case of nonseparable Banach spaces}\label{analysis}

\subsection{Main assumptions}

Let $X$ be a not necessarily separable Banach space which can be
embedded into a separable Banach space $Z$.
Let $\tau$ be a topology on $X$ which is weaker than the norm topology on $X$.
Therefore, we refer to $\tau$ as the \emph{weak topology}.
In addition let $\Rcal:X \rightarrow [0,+\infty]$ be a proper functional.

We define a metric on the space $X$ by the norm of $Z$ induced on $X$ and a pseudometric generated by the function
$\Rcal$:
\begin{equation}\label{metric}
 d(u,v)=\|u-v\|_Z+|\Rcal(u)-\Rcal(v)|.
\end{equation}
We shall denote by $\tau_d$ the  topology generated by this metric. Relating to the above discussion, this is an intermediary topology
between the norm topology on $X$ and the weak topology $\tau$.

The following elementary result gives us a motivation for approximating a nonseparable Banach space in a  topology that is weaker than  the norm topology.

\begin{proposition} A Banach space $X$ is separable if and only if there exists a nested (increasing) sequence of finite dimensional subspaces $\{X_n\}$ such that

\[
{\overline{\cup_{n\in\mathbb{N}} X_n}}=X,
\]
where the closure is considered with respect to the norm topology of $X$.
\end{proposition}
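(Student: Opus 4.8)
The plan is to prove the two implications separately by direct construction, since the statement is essentially a repackaging of standard facts about separability.

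First I would treat the implication ``$X$ separable $\Rightarrow$ such a sequence exists''. Fix a countable dense set $\{x_k : k\in\mathbb{N}\}$ in $X$ and put $X_n := \operatorname{span}\{x_1,\dots,x_n\}$. Then each $X_n$ is finite dimensional (with $\dim X_n \le n$), the sequence is nested since $X_n \subseteq X_{n+1}$, and $\bigcup_{n} X_n \supseteq \{x_k : k\in\mathbb{N}\}$; taking closures gives $\overline{\bigcup_n X_n} \supseteq \overline{\{x_k : k\in\mathbb{N}\}} = X$, while the reverse inclusion is trivial. This direction is immediate.

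The converse ``such a sequence exists $\Rightarrow$ $X$ separable'' is the part that needs a little argument. I would use that each finite dimensional subspace $X_n$ is separable: fixing an algebraic basis of $X_n$, the set $D_n$ of its rational linear combinations (or $(\mathbb{Q}+i\mathbb{Q})$-linear combinations in the complex case) is countable and dense in $X_n$, which follows from the equivalence of all norms on a finite dimensional space. Then $D := \bigcup_{n} D_n$ is a countable union of countable sets, hence countable; it is dense in $\bigcup_{n} X_n$, which by hypothesis is dense in $X$, so $D$ is dense in $X$ and $X$ is separable.

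I do not anticipate a genuine obstacle here. The only points requiring a moment of care are the two facts invoked in the ``if'' direction — separability of finite dimensional normed spaces, and countability of a countable union of countable sets — both of which are classical. The construction in the ``only if'' direction is the one that will be reused in the discretization scheme, so I would phrase it explicitly rather than merely asserting existence.
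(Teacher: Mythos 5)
Your proof is correct: both directions are carried out properly, and the two classical ingredients you invoke (separability of finite dimensional normed spaces via rational linear combinations of a basis, and countability of countable unions) are exactly what is needed to make the ``if'' direction work, while the span construction settles the ``only if'' direction. Note that the paper states this proposition as an elementary result and gives no proof at all, so there is nothing to compare against; your argument is the standard one the authors evidently have in mind, and it is complete as written.
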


Consider a sequence of nested, finite dimensional subspaces $\{X_n\}$  which satisfies
\begin{equation}\label{closure}
{\overline{\cup_{n\in\mathbb{N}} X_n}}^d=X;
\end{equation}
That is, $X$ is the closure of the reunion of the subspaces $X_n$
with respect to the topology of the metric $d$. This property holds for many  nonseparable Banach spaces - see several examples  in Section \ref{partic_nonsep}.

We are given approximation operators $F_m$ of $F$, which have the same domain $\df$ as
$F$. We assume that the operators $F,F_m$ satisfy the following approximation property:
\begin{equation}\label{closeness}
\|F(u)-F_m(u)\| \leq \rho_m \text{ for all } u\in\df\cap \dom(\Rcal)\;.
\end{equation}
Here, the constant $\rho_m$ should only depend on $m$ and satisfy
\begin{equation*}
\lim_{m\rightarrow\infty} \rho_m=0\;.
\end{equation*}
Denote
\[
D_n:=X_n\cap\df\cap \dom(\Rcal), n\in\mathbb{N},
\]
and assume that the sets $D_n$ are nonempty. We are interested in approximating
$\Rcal$-minimizing solutions of equation (\ref{nonlin-eq}) by
solutions $u_{m,n}^{\alpha,\delta}\in D_n$ of the problem
\begin{equation}\label{reg}
\min \left\{\norm{F_m(u)-\yd} { }^2+\alpha
\Rcal(u)\right\}\,\,\,\,\mbox{subject to} \,\,\,u\in D_n.
\end{equation}

In order to pursue the analysis, we make several (standard) assumptions on the spaces $X$, $Y$, the operator $F$,
the functional $\Rcal$ (see also (\cite{ResSch06,HofKalPoeSch07,SchGraGroHalLen08}), and on the approximations
$X_n$, $F_m$ as well:
\smallskip

\begin{assumption}
\label{newassump}

\begin{enumerate}
\item \label{it1} The Banach space $X$  is provided with  a topology
      $\tau$ such that
\begin{itemize}
\item The topology $\tau_d$ is finer than the topology $\tau$.
\item The norm topology is finer than the topology $\tau_d$.
\end{itemize}

\item \label{it2} The domain $\df$ is $\tau$-closed.
\item \label{it3} The operator $F: \df \subseteq X \to Y$ is sequentially $\tau$-weakly closed.
That is, $\{u_k\}\subset\df$, $u_k\stackrel{\tau}\to u$ and $F(u_k)\stackrel{w}\to v$ imply $u\in\df$ and $v=F(u)$. Moreover, the operator $F$
is continuous from $\df\subset Z$ with the norm topology to $(Y,\|\cdot\|)$
\item \label{it4} For every $m\in\mathbb{N}$, the operator $F_m$ is sequentially $\tau$-weakly closed.
\item \label{it5} The function $\Rcal$ is  sequentially
      $\tau$ - lower semi-continuous.
\item \label{it6} For every $M>0$, $\alpha>0$ and every $m,n\in\mathbb{N}$, the sets
      \begin{equation}\label{compact}
      \{u\in X_n: \norm{F (u)} { }^2+\alpha \Rcal(u)\leq M\}
      \end{equation}
      are $\tau$-sequentially relatively compact.
\item \label{it7} For every $u\in X$, there exists some
           $u_n \in X_n$ such that $d(u_n,u) \to 0$ as
      $n\rightarrow \infty$.
\end{enumerate}
\end{assumption}

\subsection{Well-posedness of the discretized problem}

We emphasize again one of the main ideas in this work: When discretizing a nonseparable Banach space, one could
work with topologies which are weaker than the original norm topology and which might be more natural than the
latter. A well-posedness result for problem \eqref{reg}  can be stated now
in this theoretical setting.\smallskip

\begin{proposition}
\label{pr3.1} Let $m,n\in\mathbb{N}$ and $\alpha,\delta>0$ be fixed.
Moreover, let assumptions \ref{newassump} and \eqref{closeness},
\eqref{noise} be satisfied.

Then, for every $y^{\delta} \in Y$, there exists at least one minimizer $u$ of \eqref{reg}.

Moreover, the minimizers of \eqref{reg} are stable with respect to the data $\yd$ in the
following sense:
if $\{y_k\}_{k\in\mathbb{N}}$ converges strongly to $\yd$, then every sequence
$\{u_k\}_{k\in\mathbb{N}}$ of minimizers of \eqref{reg} where $\yd$ is replaced by
$y_k$ has a subsequence $\{u_l\}_{l\in\mathbb{N}}$  which converges with respect to the topology
$\tau$ to a minimizer $\tilde u$ of \eqref{reg}  and such that $\{\Rcal(u_l)\}_{l\in\mathbb{N}}$
converges to $\Rcal(\tilde u)$, as $l\rightarrow\infty$.
\end{proposition}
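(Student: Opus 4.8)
The plan is to prove existence via the direct method in the calculus of variations, and then to prove stability by a diagonal-type argument that extracts a convergent subsequence from a sequence of minimizers. Throughout, the key mechanism is that the cost functional $u\mapsto \norm{F_m(u)-\yd}{}^2+\alpha\Rcal(u)$ has $\tau$-sequentially compact sublevel sets on $X_n$ (Assumption \ref{newassump}.\ref{it6}, together with the approximation property \eqref{closeness} to pass between $F$ and $F_m$) and is $\tau$-sequentially lower semicontinuous (from \ref{newassump}.\ref{it4} for the fidelity term and \ref{newassump}.\ref{it5} for $\Rcal$).

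\textbf{Existence.} First I would note that $D_n$ is nonempty by hypothesis, so the infimum $M_0:=\inf\{\norm{F_m(u)-\yd}{}^2+\alpha\Rcal(u):u\in D_n\}$ is finite (it is $\geq 0$ and $<+\infty$). Take a minimizing sequence $\{u_j\}\subset D_n$. For $j$ large the cost is bounded by, say, $M_0+1$; combined with \eqref{closeness} one gets $\norm{F(u_j)}{}^2+\alpha\Rcal(u_j)\leq M$ for a suitable $M$, so by \ref{newassump}.\ref{it6} a subsequence converges in $\tau$ to some $u\in X_n$ (here one uses that $X_n$ is finite dimensional, hence $\tau$-closed, so the limit stays in $X_n$). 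By \ref{newassump}.\ref{it2} and \ref{newassump}.\ref{it4}, after possibly passing to a further subsequence along which $F_m(u_j)$ converges weakly (the values $F_m(u_j)$ are bounded in the Hilbert space $Y$ since the cost is bounded, and bounded sets in $Y$ are weakly sequentially compact), the $\tau$-weak closedness of $F_m$ gives $u\in\df$ and $F_m(u_j)\stackrel{w}\to F_m(u)$. Then weak lower semicontinuity of $\norm{\cdot-\yd}{}^2$ in $Y$ and $\tau$-lsc of $\Rcal$ yield
\[
\norm{F_m(u)-\yd}{}^2+\alpha\Rcal(u)\leq\liminf_j\left(\norm{F_m(u_j)-\yd}{}^2+\alpha\Rcal(u_j)\right)=M_0,
\]
so $u\in D_n$ is a minimizer.

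\textbf{Stability.} Now let $y_k\to\yd$ strongly in $Y$ and let $u_k\in D_n$ minimize the cost with $\yd$ replaced by $y_k$. Comparing $u_k$ with a fixed $v\in D_n$ gives $\norm{F_m(u_k)-y_k}{}^2+\alpha\Rcal(u_k)\leq\norm{F_m(v)-y_k}{}^2+\alpha\Rcal(v)$, and since $\{y_k\}$ is bounded the right-hand side is bounded uniformly in $k$; hence $\Rcal(u_k)$ and $\norm{F_m(u_k)}{}{}$ are bounded, and via \eqref{closeness} so is $\norm{F(u_k)}{}{}$. By \ref{newassump}.\ref{it6} there is a subsequence $u_l\stackrel{\tau}\to\tilde u\in X_n$, and (passing to a further subsequence) $F_m(u_l)\stackrel{w}\to F_m(\tilde u)$ with $\tilde u\in\df$ by \ref{newassump}.\ref{it2},\ref{it4}. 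To identify $\tilde u$ as a minimizer for $\yd$, fix an arbitrary $w\in D_n$ and estimate, using $\norm{F_m(u_l)-\yd}{}\leq\norm{F_m(u_l)-y_l}{}+\norm{y_l-\yd}{}$ and lower semicontinuity,
\[
\norm{F_m(\tilde u)-\yd}{}^2+\alpha\Rcal(\tilde u)\leq\liminf_l\left(\norm{F_m(u_l)-y_l}{}^2+\alpha\Rcal(u_l)\right)\leq\limsup_l\left(\norm{F_m(w)-y_l}{}^2+\alpha\Rcal(w)\right)=\norm{F_m(w)-\yd}{}^2+\alpha\Rcal(w).
\]
Taking $w=\tilde u$ itself first shows $\limsup_l\big(\norm{F_m(u_l)-y_l}{}^2+\alpha\Rcal(u_l)\big)\leq\norm{F_m(\tilde u)-\yd}{}^2+\alpha\Rcal(\tilde u)$, and combined with the reverse liminf inequality (plus weak convergence $F_m(u_l)-y_l\stackrel{w}\to F_m(\tilde u)-\yd$) one concludes $\norm{F_m(u_l)-y_l}{}\to\norm{F_m(\tilde u)-\yd}{}$ and hence $\Rcal(u_l)\to\Rcal(\tilde u)$. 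Then the displayed chain with general $w$ shows $\tilde u$ minimizes \eqref{reg}.

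The main obstacle is the bookkeeping around the fidelity term: one must repeatedly pass to subsequences so that $F_m(u_l)$ converges weakly in $Y$, invoke $\tau$-weak closedness of $F_m$ to land the weak limit on $F_m(\tilde u)$, and then carefully combine weak lsc of the squared norm with the minimality comparisons to squeeze out \emph{convergence} (not just lsc) of both $\norm{F_m(u_l)-y_l}{}$ and $\Rcal(u_l)$ — the separation of these two nonnegative terms in the limit is exactly what forces $\Rcal(u_l)\to\Rcal(\tilde u)$. A minor point to handle cleanly is that all limits genuinely remain in $X_n$, which is immediate since $X_n$ is finite dimensional and $\tau$ is weaker than the norm topology, on which $X_n$ is closed.
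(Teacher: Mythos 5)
Your proposal is correct and follows essentially the same route as the paper, which does not spell out the argument but refers to the standard direct-method proof of Theorem 3.23 in \cite{SchGraGroHalLen08}: minimizing/comparison sequences, the compactness assumption of Item \ref{it6} (correctly reached from the $F_m$-cost via \eqref{closeness}), $\tau$-closedness of $\df$, sequential $\tau$-weak closedness of $F_m$, lower semicontinuity of the two terms, and the final splitting argument that upgrades convergence of the sum to convergence of $\Rcal(u_l)$ and of the residual separately. Your extra remark that the $\tau$-limit stays in $X_n$ because finite dimensional subspaces are closed in the (weaker, Hausdorff linear) topology $\tau$ is a point the paper glosses over, and is valid in all the examples considered there.
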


The proof is analogous to the one for Theorem 3.23 in \cite{SchGraGroHalLen08},
which in turn generalizes a proof in \cite{EngKunNeu89} from
Hilbert to Banach spaces.

In the sequel, we prove (semi)convergence of the finite dimensional regularization method.

\begin{theorem}\label{th_conv} Let assumptions \ref{newassump} be satisfied. Moreover, assume that:

($i$) Equation (\ref{nonlin-eq}) has an $\Rcal$-minimizing solution $\bu$ in the interior of $\dom(\Rcal)\cap\df$, considered in the norm topology;

($ii$) $v_n\in \mathcal{D}(F)$ for $n$ sufficiently large, where $v_n\in X_n$ and $d(v_n,\bu) \to 0$ as
      $n\rightarrow \infty$;

($iii$) The parameter $\alpha=\alpha(m,n,\delta)$ is such that $\alpha\rightarrow 0$, $\frac{{\delta}^2}{\alpha}\rightarrow 0$, $\frac{{\rho_m}^2}{\alpha}\rightarrow 0$  and
\begin{equation}\label{cond}
\frac{\|F(v_n)-y\|}{\sqrt{\alpha}}\rightarrow 0
\end{equation}
as $\delta\rightarrow 0$ and $m,n\rightarrow\infty$.

If (\ref{closeness}), (\ref{noise}) also hold, then every sequence $\{u_k\}$, with $u_k:=u_{{m_k},{n_k}}^{\alpha_k,\delta_k}$ and $\alpha_k:=\alpha(m_k,n_k,\delta_k)$ where
$\delta_k\rightarrow 0$, $n_k\rightarrow\infty$, $m_k\rightarrow\infty$ as $k\rightarrow\infty$ and $u_k$ is a solution of (\ref{reg}),
has a  subsequence $\{u_l\}$ which converges with respect to the topology $\tau$ to an $\Rcal$-minimizing solution $\tilde u$ of equation (\ref{nonlin-eq}) and such that $\{\Rcal(u_l)\}_{l\in\mathbb{N}}$
converges to $\Rcal(\tilde u)$, as $l\rightarrow\infty$. Moreover, if $\bu$ is the unique solution of (\ref{nonlin-eq}), then the entire sequence $\{u_k\}$ converges to $\bu$ in the sense of $\tau$ and $\Rcal$ as above.
\end{theorem}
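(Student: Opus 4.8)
The plan is to run the classical subsequence argument for Tikhonov-type regularization (essentially as in Proposition~\ref{pr3.1}, but now letting $\delta\to 0$, $m\to\infty$ and $n\to\infty$ simultaneously), the extra work being the bookkeeping of three perturbations at once: the noise level $\delta_k$, the operator approximation $\rho_{m_k}$ from \eqref{closeness}, and the discretization error $\norm{F(v_{n_k})-y}{}$.

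\emph{A priori estimates.} For $k$ large the comparison point $v_{n_k}$ lies in $D_{n_k}=X_{n_k}\cap\df\cap\dom(\Rcal)$: it is in $X_{n_k}$ and in $\df$ by (ii), and in $\dom(\Rcal)$ because $d(v_{n_k},\bu)\to 0$ forces $\Rcal(v_{n_k})<\infty$ (hypotheses (i)--(ii) together making such admissible comparison elements available). Testing the minimality of $u_k$ in \eqref{reg} against $v_{n_k}$, then using the triangle inequality with \eqref{closeness} and \eqref{noise}, gives
\[
\norm{F_{m_k}(u_k)-y^{\delta_k}}{}^2+\alpha_k\Rcal(u_k)\le\bigl(\rho_{m_k}+\norm{F(v_{n_k})-y}{}+\delta_k\bigr)^2+\alpha_k\Rcal(v_{n_k}).
\]
Dividing by $\alpha_k$, the crude bound $(a+b+c)^2\le 3(a^2+b^2+c^2)$ and hypothesis (iii) make the first term tend to $0$, while $\Rcal(v_{n_k})\to\Rcal(\bu)$ since $d(v_{n_k},\bu)\to 0$; hence $\limsup_k\Rcal(u_k)\le\Rcal(\bu)$, so in particular $\sup_k\Rcal(u_k)=:M<\infty$. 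Feeding $\alpha_k\to 0$ and the boundedness of $\Rcal(v_{n_k})$ back into the same inequality yields $\norm{F_{m_k}(u_k)-y^{\delta_k}}{}\to 0$, whence $\norm{F(u_k)-y}{}\le\rho_{m_k}+\norm{F_{m_k}(u_k)-y^{\delta_k}}{}+\delta_k\to 0$.

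\emph{Compactness and identification of the limit.} From $\Rcal(u_k)\le M$ and $\sup_k\norm{F(u_k)}{}<\infty$ we get $\norm{F(u_k)}{}^2+\Rcal(u_k)\le M^\ast$ for a fixed $M^\ast$, so assumption~\ref{newassump}.\ref{it6} furnishes a subsequence $\{u_l\}$ with $u_l\stackrel{\tau}\to\tilde u$ for some $\tilde u\in X$. Since $u_l\in\df$, $u_l\stackrel{\tau}\to\tilde u$ and $F(u_l)\to y$ (in particular weakly), the sequential $\tau$-weak closedness of $F$ (\ref{newassump}.\ref{it3}) gives $\tilde u\in\df$ and $F(\tilde u)=y$. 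By $\tau$-lower semicontinuity of $\Rcal$ (\ref{newassump}.\ref{it5}) and the $\limsup$ bound above,
\[
\Rcal(\tilde u)\le\liminf_l\Rcal(u_l)\le\limsup_l\Rcal(u_l)\le\Rcal(\bu),
\]
and since $\bu$ is $\Rcal$-minimizing while $F(\tilde u)=y$ we also have $\Rcal(\bu)\le\Rcal(\tilde u)$; thus $\Rcal(\tilde u)=\Rcal(\bu)$, i.e. $\tilde u$ is an $\Rcal$-minimizing solution, and all four quantities coincide, giving $\Rcal(u_l)\to\Rcal(\tilde u)$. If $\bu$ is the unique solution of \eqref{nonlin-eq}, this forces $\tilde u=\bu$; since every subsequence of $\{u_k\}$ admits a further subsequence with these properties, the subsequence-of-subsequence principle (applied to the topological convergence $u_k\stackrel{\tau}\to\bu$, and in $\R$ to the scalars $\Rcal(u_k)$) upgrades the conclusion to the full sequence.

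The step I expect to cause the most trouble is the compactness extraction: the iterates $u_l$ live in the \emph{varying} subspaces $X_{n_l}$, not in a single $X_n$, so assumption~\ref{newassump}.\ref{it6} (stated for fixed $n$ and fixed $\alpha$) has to be invoked along the union $\bigcup_n X_n$ --- exactly the object whose $d$-closure is $X$ by \eqref{closure}, and whose sublevel sets' $\tau$-relative compactness is what must be checked in the nonseparable examples (BV, BD, $L^\infty$) of Section~\ref{partic_nonsep}. The only other delicacy is internal to the a priori estimate, namely matching the combined error $\bigl(\rho_{m_k}+\norm{F(v_{n_k})-y}{}+\delta_k\bigr)^2/\alpha_k$ against the three smallness conditions packaged in hypothesis (iii).
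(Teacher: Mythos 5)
Your proposal is correct and follows essentially the same route as the paper's own proof: comparison of $u_k$ with $v_{n_k}$ in the Tikhonov functional, the resulting $\limsup_k \Rcal(u_k)\le\Rcal(\bu)$ bound and $F(u_k)\to y$, then the compactness hypothesis of Assumption \ref{newassump} plus sequential $\tau$-weak closedness and $\tau$-lower semicontinuity to identify a $\tau$-limit $\tilde u$ that is $\Rcal$-minimizing with $\Rcal(u_l)\to\Rcal(\tilde u)$, and a subsequence-of-subsequence argument for the uniqueness case. The only point worth noting is that you explicitly flag the issue of invoking the relative compactness of sublevel sets along the varying subspaces $X_{n_k}$ rather than a single fixed $X_n$ --- a subtlety the paper's proof passes over silently --- but your argument is otherwise the same as theirs.
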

\medskip

\begin{proof}
From (\ref{metric}) and Assumption \ref{newassump}, Item \ref{it7}  it follows that $\Rcal(v_n)\rightarrow \Rcal(\bu)$
for $n\rightarrow\infty$. From the definition of $\umn$, the estimate (\ref{noise}) and (\ref{closeness}),
it follows that
\begin{eqnarray}\label{chain}
\|F_m(\umn)-\yd\| ^2+\alpha \Rcal(\umn)\leq \|F_m(v_n)-\yd\| ^2+\alpha \Rcal(v_n) \nonumber\\
\leq (\|F_m(v_n)-F(v_n)\| +\|F(v_n)-F(\bu)\| +\|F(\bu)-\yd\| )^2+\alpha \Rcal(v_n) \nonumber \\
\leq (\rho_m+\|F(v_n)-y\|+\delta)^2+\alpha \Rcal(v_n).
\end{eqnarray}
Therefore, 
 \[
 \Rcal(\umn)\leq \frac{(\rho_m+\|F(v_n)-y\|+\delta)^2}{\alpha}+\Rcal(v_n)\;.
 \]
Assumption ($iii$) now guarantees that
\begin{equation}\label{limsup}
\limsup \Rcal(\umn)\leq \Rcal(\bu).
\end{equation}
Observe that
 $\lim_{n\rightarrow\infty}\|F(v_n)-F(\bu)\|=0$
since $F$ is continuous at $\bu$ (compare Assumption \ref{newassump} Item \ref{it3}) and (i), (ii)).
By (iii), the quantity
$(\rho_m+\|F(v_n)-F(\bu)\|+\delta)^2$
in \eqref{chain}
converges to zero as $\delta\rightarrow 0$ and
$m,n\rightarrow\infty$, and then
\[
\lim\|F_m(\umn)-\yd\| =0\;.
\]
Therefore, by applying again (\ref{closeness}) and (\ref{noise}), we also have that
\begin{equation}\label{F_conv}
F(\umn)\rightarrow y,
\end{equation}
with respect to the norm of $Y$, as $\delta,\alpha\rightarrow 0$ and
$m,n\rightarrow\infty$.
 Consider $\alpha_k:=\alpha(m_k,n_k,\delta_k)$ and $u_k:=u_{{m_k},{n_k}}^{\alpha_k,\delta_k}$. Note that $\{\|F(u_k)\|^2+\alpha_k \Rcal(u_k)\}$
is bounded. Hence, by (\ref{closeness}) and by the compactness hypothesis in Assumption \ref{newassump}, there exists
a subsequence $\{u_j\}_{j\in\mathbb{N}}$  which is $\tau$-convergent to some $\tilde u\in D_n$.   Due to the lower semicontinuity of
$\Rcal$ and (\ref{limsup}), we get
\begin{equation*}
\Rcal(\tilde u)\leq \liminf_{j\rightarrow\infty}\Rcal(u_j)\leq \limsup_{j\rightarrow\infty}\Rcal(u_j)\leq \Rcal(\bu).
\end{equation*}
We also have that $F(\tilde u)=y$,
as $j\rightarrow\infty$ due to  (\ref{F_conv}) and Item \ref{it4} in Assumption \ref{newassump}.
Therefore, $\tilde u$ is an $\Rcal$ minimizing solution of equation (\ref{nonlin-eq}) and
\begin{equation*}
\Rcal(\tilde u)=\lim_{j\rightarrow\infty}\Rcal(u_j).
\end{equation*}
If the solution $\bu$ is unique, the only limit point of $\{u_k\}$ with respect to $\tau$ and $\Rcal$ is $\bu$.
\end{proof}
\begin{remark} In some situations, convergence of a sequence $\{u_k\}$ to $u$ with respect to the topology $\tau$ and such that
$\Rcal(u_k)\to\Rcal(u)$ implies $d(u_k,u)\to 0$ as $k\to\infty$. This happens for BV and BD which are embedded into
$Z=L^1$, and $\tau$ is chosen as the weak$^*$ topology, $\Rcal$ is the total variation and total deformation seminorm, respectively - see Section \ref{partic_nonsep}.
\end{remark}
\medskip

\subsection{Convergence Rates}

In this section, we  establish error estimates  for the
approximation method we analyze, with respect to Bregman distances. To this end, assume throughout this section that the regularization functional $\Rcal$ is convex.

Recall that \textit{the Bregman distance} with respect to a possibly non-smooth convex functional $\Rcal$  is defined by
\begin{equation*}
D_\Rcal(v,u)=\{D_{\Rcal}^\xi(v,u): \xi \in \partial \Rcal(u) \neq \emptyset\},
                               \quad u,v \in \dom(\Rcal),
\end{equation*}
where
\begin{equation*}
D_{\Rcal}^\xi(v,u)=\Rcal(v)-\Rcal(u)-\langle \xi,v-u\rangle.
\end{equation*}
More information about Bregman distances and their role in optimization and inverse problems can be found in
\cite{Res05}. Error estimates for variational or iterative regularization of (\ref{nonlin-eq}) by means of
a non-quadratic penalty have been shown in \cite{BurOsh04,Res05,ResSch06,HofKalPoeSch07,BurResHe07,FriSch09}.
The Bregman distance $D_{\Rcal}$ associated with $\Rcal$ was naturally chosen as the measure of discrepancy
between the error estimates.

 We assume Frechet differentiability of the operator $F$ around $\bu$
which is considered to be in the interior of $\mathcal{D}(F) \cap
\mathcal{D}(\Rcal)$; moreover, assume that its extension to the
space $Z$ is also Frechet differentiable around $\bu$. In fact, our
study is based on the following source-wise representation:

\emph{There exists $\omega \in Y$ such that}
\begin{equation}\label{bregsc}
\xi = F'(\bu)^*\omega \in \partial \Rcal(\bu),
\end{equation}
and  on the following nonlinearity condition (see also \cite{ResSch06}):

 \emph{There exist $\varepsilon>0$ and $c>0$ such that}
\begin{equation}\label{breg_condition}
\norm{F(u)-F(\bu)-F'(\bu)(u-\bu)} { } \leq c D_\Rcal(u,\bu),
\end{equation}
\emph{is satisfied for all $u\in\df\cap
U_{\varepsilon}(\bu)$ with respect to the above subgradient $\xi$
and such that}
\begin{equation}\label{ineq1}
c \norm{\omega} { }<1\;.
\end{equation}
 Here
\[
D_\Rcal(u,\bu)=\Rcal(u)-\Rcal(\bu)-\langle\xi,u-\bu\rangle,
\]
where  $\xi
\in\partial \Rcal(\bu)$ satisfies (\ref{bregsc}). Let us denote by
\begin{equation}\label{gamma}
\gamma_n:=\|F'(\bu)(v_n-\bu)\| ,
\end{equation}
\begin{equation}\label{lambda}
\lambda_n:=D_\Rcal(v_n,\bu).
\end{equation}
Here $\{v_n\}$ is a sequence as in Theorem \ref{th_conv}.

In the following we derive a relation between the Bregman distance
and the metric $d$ at $\bu$. Observe that
\[
D_\Rcal(v_n,\bu)=\Rcal(v_n)-\Rcal(\bu)-\langle\omega,F'(\bu)(v_n-\bu)\rangle.
\]
One has  $\Rcal(v_n)-\Rcal(\bu)\rightarrow 0$ since $d(v_n,\bu)\rightarrow 0$, as $n\rightarrow\infty$
(see (\ref{metric})).
Moreover, $F'(\bu)(v_n-\bu)\rightarrow 0$. As a consequence, $\lim_{n\rightarrow\infty}\lambda_n=0$.
Thus, convergence with respect to the metric $d$ is stronger than convergence with respect to the related Bregman distance.

\begin{theorem}
Suppose that Assumption \ref{newassump}, the assumptions in the
previous result, inequalities (\ref{noise}), (\ref{bregsc}) and
(\ref{breg_condition})  hold. Moreover, assume that
$\rho_m=O(\delta+\lambda_n+\gamma_n)$, with $\lambda_n$, $\gamma_n$
given by (\ref{lambda}), (\ref{gamma}). If $\alpha\sim
\max\{\delta,\lambda_n,\gamma_n\}$, then
\begin{equation}\label{error}
 D_\Rcal(\umn,\bu)=O(\delta+\lambda_n+\gamma_n).
\end{equation}
\end{theorem}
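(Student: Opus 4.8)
The plan is to run the classical variational convergence-rate argument: start from the minimality of $\umn$ in \eqref{reg}, turn the minimality inequality into an estimate for $\alpha D_\Rcal(\umn,\bu)$ by subtracting the appropriate affine term, use the source condition \eqref{bregsc} and the nonlinearity condition \eqref{breg_condition} to absorb the cross term, and finish with a ``completing the square'' estimate, keeping track of all quantities in terms of $\delta$, $\lambda_n$, $\gamma_n$, $\rho_m$.

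First I would use that $\umn$ minimizes \eqref{reg} over $D_n$ and that $v_n\in D_n$ for $n$ large (assumption (ii) of Theorem~\ref{th_conv}) to obtain
\[
\|F_m(\umn)-\yd\|^2+\alpha\Rcal(\umn)\le \|F_m(v_n)-\yd\|^2+\alpha\Rcal(v_n).
\]
Using \eqref{closeness}, \eqref{noise}, $F(\bu)=y$, the nonlinearity condition \eqref{breg_condition} applied at $v_n$, and the definitions \eqref{gamma}, \eqref{lambda}, the data term on the right is bounded by $(\rho_m+c\lambda_n+\gamma_n+\delta)^2=O\big((\delta+\lambda_n+\gamma_n)^2\big)$ once the hypothesis $\rho_m=O(\delta+\lambda_n+\gamma_n)$ is invoked (this, and the use of \eqref{breg_condition}, needs $v_n\in U_\varepsilon(\bu)$, which holds for $n$ large since $d(v_n,\bu)\to 0$). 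From $\xi=F'(\bu)^*\omega$ one also has $\Rcal(v_n)-\Rcal(\bu)=\lambda_n+\inner{\omega}{F'(\bu)(v_n-\bu)}\le\lambda_n+\norm{\omega}{}\gamma_n$. Next I would subtract $\alpha\Rcal(\bu)+\alpha\inner{\xi}{\umn-\bu}$ from both sides to make $\alpha D_\Rcal(\umn,\bu)$ appear on the left; rewriting $\inner{\xi}{\umn-\bu}=\inner{\omega}{F'(\bu)(\umn-\bu)}$ by \eqref{bregsc}, splitting $F'(\bu)(\umn-\bu)=\big(F(\umn)-y\big)-\big(F(\umn)-F(\bu)-F'(\bu)(\umn-\bu)\big)$, and applying \eqref{breg_condition} at $\umn$ gives
\[
-\alpha\inner{\xi}{\umn-\bu}\le \alpha c\norm{\omega}{}D_\Rcal(\umn,\bu)+\alpha\norm{\omega}{}\,\norm{F(\umn)-y}{},
\]
while $\norm{F(\umn)-y}{}\le \rho_m+\delta+\norm{F_m(\umn)-\yd}{}$ by \eqref{closeness}, \eqref{noise}. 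Collecting these bounds, and writing $t:=\norm{F_m(\umn)-\yd}{}$, leads to
\[
(1-c\norm{\omega}{})\,\alpha D_\Rcal(\umn,\bu)\le \|F_m(v_n)-\yd\|^2-t^2+\alpha\norm{\omega}{}\,t+\alpha\big(\lambda_n+\norm{\omega}{}\gamma_n+\norm{\omega}{}(\rho_m+\delta)\big).
\]
Since $-t^2+\alpha\norm{\omega}{}\,t\le \alpha^2\norm{\omega}{}^2/4$, and $c\norm{\omega}{}<1$ by \eqref{ineq1}, I can divide by $(1-c\norm{\omega}{})\alpha$. Plugging in $\alpha\sim\max\{\delta,\lambda_n,\gamma_n\}$ and the earlier bound $\|F_m(v_n)-\yd\|^2=O\big((\delta+\lambda_n+\gamma_n)^2\big)$, each resulting term is $O(\delta+\lambda_n+\gamma_n)$ — using $(\delta+\lambda_n+\gamma_n)^2/\max\{\delta,\lambda_n,\gamma_n\}\le 3(\delta+\lambda_n+\gamma_n)$ for the data term and $\alpha\norm{\omega}{}^2/4=O(\max\{\delta,\lambda_n,\gamma_n\})$ for the square term — which yields exactly \eqref{error}.

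The step I expect to be the main obstacle is the application of the nonlinearity condition \eqref{breg_condition} at $\umn$, since this presupposes $\umn\in\df\cap U_\varepsilon(\bu)$. I would justify it using the (semi)convergence result Theorem~\ref{th_conv}: in the parameter regime where $\alpha\sim\max\{\delta,\lambda_n,\gamma_n\}$ is small, $\umn$ is $\tau$-close to $\bu$ and, via the metric $d$, close to $\bu$ in $Z$, and $\bu$ lies in the interior of $\df\cap\dom(\Rcal)$; accordingly the rate \eqref{error} is to be read as holding for all sufficiently small $\delta$ and sufficiently large $m,n$. Everything else in the argument is routine bookkeeping with the triangle inequality, nonnegativity of $D_\Rcal$, and the elementary quadratic estimate.
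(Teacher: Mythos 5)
Your proof is correct and follows the same core argument as the paper: minimality of $\umn$ against the competitor $v_n$, the bound $\|F_m(v_n)-\yd\|\le\rho_m+c\lambda_n+\gamma_n+\delta$, insertion of the source condition \eqref{bregsc} to rewrite $\langle\xi,\umn-\bu\rangle$, and absorption of the cross term via \eqref{breg_condition} and \eqref{ineq1}. The only real divergence is in the final bookkeeping: the paper drops the nonnegative Bregman term to get a quadratic inequality in $\zeta_n=\|F_m(\umn)-y\|$, solves it explicitly (their estimate \eqref{error_images}), and substitutes back into \eqref{interm}, whereas you keep $-t^2+\alpha\|\omega\|t$ on the right and bound it at once by $\alpha^2\|\omega\|^2/4$ by completing the square. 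Your one-step variant is slightly cleaner and gives the same rate; the paper's two-step route has the marginal benefit of producing an explicit residual estimate for $\zeta_n$ along the way. Two further points in your favour: you track the $F$ versus $F_m$ discrepancy explicitly (the extra $\rho_m+\delta$ in $\|F(\umn)-y\|\le\rho_m+\delta+\|F_m(\umn)-\yd\|$), which the paper glosses over when it writes $\|F(\umn)-F(\bu)\|$ but then defines $\zeta_n$ with $F_m$; and you flag that applying \eqref{breg_condition} at $\umn$ presupposes $\umn\in\df\cap U_\varepsilon(\bu)$, justified asymptotically via Theorem \ref{th_conv} --- an issue the paper leaves entirely implicit.
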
\medskip

\begin{proof}
We have
\begin{eqnarray*}
\|F_m(\umn)-\yd\| ^2+\alpha \Rcal(\umn)\leq \|F_m(v_n)-\yd\| ^2+\alpha \Rcal(v_n)\\
\leq (\|F_m(v_n)-F(v_n)\| +\|F(v_n)-F(\bu)\| +\|F(\bu)-\yd\| )^2+\alpha \Rcal(v_n)\\
\leq (\rho_m+\|F(v_n)-F(\bu)-F'(\bu)(v_n-\bu)\|  +\|F'(\bu)(v_n-\bu)\|+\delta)^2+\alpha \Rcal(v_n)\\
\leq (\rho_m+c\lambda_n +\gamma_n+\delta)^2+\alpha \Rcal(v_n)\;.
\end{eqnarray*}
Denote
\begin{equation}\label{beta}
\beta_n:=(\rho_m+c\lambda_n +\gamma_n+\delta)^2.
\end{equation}
We use (\ref{breg_condition}) and get
\begin{eqnarray*}
\|F_m(\umn)-\yd\| ^2+\alpha D_\Rcal(\umn,\bu)&\leq & \beta_n+\alpha  \Rcal(v_n)-\alpha \Rcal(\bu)-\alpha\langle \xi,\umn-\bu\rangle\\
&=& \beta_n+\alpha D_\Rcal(v_n,\bu)-\alpha\langle \xi,\umn-v_n\rangle\\
&=& \beta_n+\alpha \lambda_n -\alpha\langle \omega,F'(\bu)(\umn-v_n)\rangle\\
&=& \beta_n+\alpha \lambda_n -\alpha\langle \omega,F'(\bu)(\umn-\bu)\rangle\\
&&+\alpha\langle \omega,F'(\bu)(v_n-\bu)\rangle\\
&\leq & \beta_n+\alpha \lambda_n+\alpha c\|\omega\| D_\Rcal(\umn,\bu)\\
&&+\alpha\|\omega\|\|F(\umn)-F(\bu)\| +\alpha\|\omega\|\gamma_n.
\end{eqnarray*}
Therefore,
\begin{equation}\label{interm}
\|F_m(\umn)-\yd\| ^2+\alpha(1-c\|\omega\|) D_\Rcal(\umn,\bu)\leq  \beta_n+\alpha \lambda_n+\alpha\|\omega\|(\zeta_n+\gamma_n),
\end{equation}
where $\zeta_n=\|F_m(\umn)-y\| $. Due to  (\ref{ineq1}), the term
$\alpha(1-c\|\omega\|) D_\Rcal(\umn,\bu)$ is non-negative.
Therefore,
\begin{equation}\label{ineq2}
\|F_m(\umn)-\yd\| ^2\leq \beta_n+\alpha \lambda_n+\alpha\|\omega\|(\zeta_n+\gamma_n).
\end{equation}
Using (\ref{noise})  we have
\begin{equation*}
\zeta_n^2\leq \left(\|F_m(\umn)-\yd\| +\|\yd-y\| \right)^2 \leq 2\|F_m(\umn)-\yd\| ^2+2\delta^2.
\end{equation*}
This together with inequality (\ref{ineq2}) implies
\[
\zeta_n^2\leq 2\beta_n+2\alpha \lambda_n+2\alpha\|\omega\|\zeta_n
+2\alpha\|\omega\|\gamma_n+2{\delta}^2,
\]
which yields
\begin{equation}\label{error_images}
\zeta_n\leq 2 \alpha\|\omega\|+\left(2{\alpha}^2\|\omega\|^2+  2{\delta}^2+ 2\beta_n+2\alpha \lambda_n
+2\alpha\|\omega\|\gamma_n\right)^{1/2}.
\end{equation}
From (\ref{interm}), it follows that
\[
\alpha(1-c\|\omega\|) D_\Rcal(\umn,\bu)\leq  \beta_n+\alpha \lambda_n+\alpha\|\omega\|\zeta_n+\alpha\|\omega\|\gamma_n,
\]
with $\zeta_n$ estimated above.
Using (\ref{beta}) and taking $\alpha\sim\max\{\delta,\lambda_n,\gamma_n\}$ yield the above convergence rate.
\end{proof}

\section{The case of separable Banach spaces}\label{separable}

Let $X$ be a separable Banach space. Consider a nested sequence of  finite dimensional subspaces $X_n, \,n\in\mathbb{N}$, such that
\[
{\overline{\cup_{n\in\mathbb{N}} X_n}}=X,
\]
where the closure is considered with respect to the norm topology of $X$. By letting  $Z=X$, a weak topology $\tau$ on $X$, a regularization functional $\Rcal:X\to [0,+\infty]$ and using the assumptions employed for the results in Section \ref{analysis}, one obtains stability and convergence  results similar to Proposition \ref{pr3.1} and Theorem \ref{th_conv}. Moreover, if $\Rcal$ is convex, then convergence rates can also be established.
\begin{remark} In some situations, convergence of a sequence $\{u_k\}$ to $u$ with respect to the topology $\tau$ and such that $\Rcal(u_k)\to\Rcal(u)$ implies $\|u_k-u\|\to 0$, as $k\to\infty$.
This is the case of locally uniformly convex reflexive Banach spaces  when $\tau$ is chosen as the weak topology on $X$ and $\Rcal=\|\cdot\|^p$, with $p\in (1,+\infty)$ such as Hilbert spaces, $L^p$ spaces, $W^{m,p}$ spaces, but also in $L^1$ when $\tau$ is the weak topology and $\Rcal$ is the Shannon entropy (see \cite{BorLew91}).
\end{remark}
\medskip

\textbf{Sparsity regularization}

Let $\{\phi_i\}$ be an orthonormal basis of $L^2(\Omega)$.  Denote by $X$ the Banach space $\ell^2$ which is identified with the
functions with bounded $\ell^2$ Fourier coefficients. Let
$X_n$ be the linear span of the first $n$ Fourier modes.

For sparsity regularization one usually takes ${\cal R}(u) = \sum_{i} |u_i|$, where
$u=\sum_i u_i \phi_i$. Thus, we consider the regularization
method of minimizing the functional
\begin{equation*}
u \to \|F(u)-\yd\|^2 + \alpha \sum_i |u_i|\,,
\end{equation*}

In this case, the topology $\tau$ on $X$ is the weak topology of $\ell^2$.
The regularization results apply also in this setting.

Another example of regularization term which promotes sparsity is
\begin{equation}\label{quasinorm}
 \Rcal(u)=\sum_i |u_i|^p,\,\,\,\,\,p\in (0,1).
\end{equation}
This setting with $X=\ell^2$ and $\tau$ taken as the weak topology of $\ell^2$ is also covered by the regularization  theory analyzed in this work. Moreover, convergence of a sequence $\{u_k\}$ to $u$ with respect to the topology $\tau$ and such that $\Rcal(u_k)\to\Rcal(u)$ implies convergence of  $\{u_k\}$ to $u$ relative to the quasinorm (\ref{quasinorm}), as $k\to\infty$, cf. \cite{GraHalSch08}.

\section{Particular nonseparable spaces}\label{partic_nonsep}
\subsection{The bounded variation functions space}

Recall that, for a bounded Lipschitz domain $\Omega\subset \R^N$ and for a given $N \in \N$, the space
$BV(\Omega)$ of $L^1(\Omega)$-functions of bounded variation mapping $\Omega$ into $\R$ can be defined as the set of functions
$w \in L^1(\Omega)$ such that the total variation of $w$ is finite, that is,
\begin{equation*}
 \int_{\Omega}\abs{Dw}_p = \sup \set{ \int w(x)\psi(x) dx:
\psi \in \Ccal_c^1(\Omega), \abs{\psi(x)}_{p'}\leq 1 \text{ for all } x\in \Omega}\;<\infty.
\end{equation*}
Here, $\abs{\cdot}_{p'}$ denotes the $l_{p'}$ vector norm where $p'=p/(p-1)$ is the conjugate exponent to $p$.
In particular we are interested in the cases $p=1,2$, where $l_{p'}=l_\infty, l_2$. The case  $p=2$ corresponds to
\emph{isotropic} total variation.

Let us recall several properties of the space $BV(\Omega)$:
\begin{itemize}
 \item It is the dual of a separable Banach space (see \cite[Remark 3.12]{AmbFusPal00}) when provided with the norm
\[
\|u\|_{BV}=\|u\|_{L^1}+ \int_{\Omega}\abs{Du}_p.
\]
\item  The space $BV(\Omega)$  is continuously embedded  in $L^r(\Omega)$, where $1\leq r\leq \frac{N}{N-1}$.
\end{itemize}

We consider the setting $X=BV(\Omega)$, with $\tau$ being the
weak$^*$ topology on $BV(\Omega)$, and $Z=L^1(\Omega)$.

The functional $\cal{R}$ is the total variation seminorm.
Consider
\begin{equation*}
 d(u,v) = \|u-v\|_{L^1(\Omega)}+\left|\int_\Omega |Du|_p - \int_\Omega |Dv|_p\right|\;.
\end{equation*}
The metric $d$ is  the metric used also in \cite{BelLus03},  which
gives the so-called \textit{strict convergence}, according to
\cite{AmbFusPal00}. A similar idea is developed in \cite{Leo05, Leo07}. Moreover, the strict convergence of a sequence $\{u_k\}$ to $u$ is equivalent
to convergence  with respect to the topology $\tau$ together with $\int_\Omega |Du_k|_1 \to \int_\Omega |Du|_1 $ as $k\to\infty$,
since weak$^*$  convergence of a sequence $\{u_k\}$ to $u$ in
$BV(\Omega)$  is equivalent to boundedness of $\{\|u_k\|_{BV})\}$
together with convergence of $\{u_k\}$ to $u$ in $L^1(\Omega)$ -
see, e.g., Proposition 3.13 in  \cite{AmbFusPal00}.

The choice of the vector norm in the definition of the bounded variation seminorm is of special importance
for approximating the $BV$space by subspaces consisting of piecewise constant functions.
Assume that $\set{\Omega_j}$ is a decomposition of $\Omega$,
and consider the following finite dimensional spaces:
\begin{equation*}
  X_n=\set{u_n =\sum_{j=1}^n u^j\chi_{\Omega_j}: u^j \in \R, 1\leq j\leq n}\;.
\end{equation*}
When considering a partition of $\Omega$ into uniform
parallelepipeds we can only guarantee the density assumption
\eqref{closure} when considering the $l_1$-norm in the definition of
$BV$ - see \cite{CasKunPol99}. If one wants an isotropic behavior of the
regularization term one has to consider the $l_2$ norm in the
definition of the $BV$-seminorm. The problem is that in the case of
uniform parallelepipeds (for instance pixels in imaging), there is
no convergence with respect to this isotropic $BV$-seminorm. One has
to consider a more general partition of the domain $\Omega$, that
allows to approximate level lines with any direction. One idea is to
use an irregular triangulation. These observations have been made by
\cite{BelLus03} and \cite{CasKunPol99}, here we only state their
main result, concerning anisotropic and isotropic total variation:

\begin{theorem}[\cite{BelLus03,CasKunPol99}]\label{th:BV}
Let $\Omega\subset \R^n$ be a polygonal domain and let $h_0>0$.
\begin{itemize}
\item
Given $u\in BV(\Omega)$, then there exists a family $\set{t_h: 0<h\leq h_0}$ of \textbf{triangulations} of
$\Omega$ such that the mesh-size of $t_h$ is at most $h$, and functions $u_h\in \mathcal{A}_h^0$, where $\mathcal{A}_h^0$
denotes the space of piecewise constant functions corresponding to the triangulation $t_h$, such that
\begin{equation*}
 \norm{u-u_h}{L^1} + \abs{\int_\Omega \abs{Du}_2 - \int_\Omega \abs{Du_h}_2 }\rightarrow 0\quad
 \text{as }h\rightarrow 0\;.
\end{equation*}
\item Given $u\in BV(\Omega)$. Let $\set{\Omega_j}$ be a decomposition of $\Omega$, into \textbf{parallelepipeds}, such that the
maximal length of a parallelepiped is smaller then $h$.
Then there exist functions $u_h\in V_h^0$, where $V_h^0$ denotes the space of piecewise constant functions corresponding to the
partition $\set{\Omega_i}$, such that
\begin{equation*}
 \norm{u-u_h}{L^1} + \abs{\int_\Omega \abs{Du}_1 - \int_\Omega \abs{Du_h}_1 }\rightarrow 0\quad
 \text{as }h\rightarrow 0\;.
\end{equation*}
\end{itemize}
\end{theorem}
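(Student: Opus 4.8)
The plan is to establish the result in two stages, as in \cite{BelLus03,CasKunPol99}: first reduce to the approximation of a \emph{smooth} function, and then discretize a smooth function on a mesh of small size. For the first stage I would invoke the classical strict-density theorem for $BV$ (see \cite[Theorem 3.9]{AmbFusPal00}): every $u\in BV(\Omega)$ is a strict limit of functions $w_\varepsilon\in C^\infty(\Omega)\cap BV(\Omega)$, i.e. $\norm{w_\varepsilon-u}{L^1}\to 0$ and $\int_\Omega\abs{Dw_\varepsilon}_p\to\int_\Omega\abs{Du}_p$. Since strict convergence is exactly convergence in the metric $d$ and $d$ obeys the triangle inequality, it suffices to approximate each smooth $w$ by piecewise constant functions in $d$; a diagonal argument in the parameters $\varepsilon$ and $h$ then yields the claim for $u$, with the triangulation family $\set{t_h}$ inheriting its dependence on $u$ through $w_\varepsilon$.

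For the second stage, fix a smooth $w$ and a mesh $\set{\Omega_j}$ of $\Omega$ of size at most $h$ — a triangulation in the first case, the prescribed parallelepiped partition in the second — and set $w_h:=\sum_j w^j\chi_{\Omega_j}$ with $w^j$ the mean of $w$ over $\Omega_j$. The $L^1$-bound is routine: a cellwise Poincar\'e inequality gives $\int_{\Omega_j}\abs{w-w^j}\leq Ch\int_{\Omega_j}\abs{\nabla w}$ with $C$ depending only on the shape-regularity of the mesh, and summing over $j$ yields $\norm{w-w_h}{L^1}\leq Ch\,\norm{\nabla w}{L^1}\to 0$.

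It remains to control the total variation. Since $w_h\to w$ in $L^1$ and $v\mapsto\int_\Omega\abs{Dv}_p$ is lower semicontinuous under $L^1$-convergence, one has $\liminf_{h\to 0}\int_\Omega\abs{Dw_h}_p\geq\int_\Omega\abs{Dw}_p$, so the task is the matching $\limsup$ bound. Here I would use that the gradient of the piecewise constant function $w_h$ is carried by the interfaces $\Sigma_{jk}=\partial\Omega_j\cap\partial\Omega_k$, with $\int_\Omega\abs{Dw_h}_p=\sum_{j<k}\abs{w^k-w^j}\,\abs{\nu_{jk}}_p\,\mathcal{H}^{N-1}(\Sigma_{jk})$, where $\nu_{jk}$ is the unit normal of $\Sigma_{jk}$. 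In the anisotropic/parallelepiped case $(p=1)$ every $\nu_{jk}$ is a coordinate unit vector, so $\abs{\nu_{jk}}_1=1$; grouping the faces by their direction and using smoothness of $w$, the sum becomes a Riemann sum of the directional derivatives of $w$, which converges as $h\to 0$ to $\int_\Omega\sum_i\abs{\partial_i w}=\int_\Omega\abs{\nabla w}_1=\int_\Omega\abs{Dw}_1$; this already gives the required $\limsup$ bound (indeed equality).

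The isotropic case $(p=2)$ is the step I expect to be the real obstacle, and it is the reason triangulations are unavoidable. On a fixed-orientation parallelepiped grid the axis-parallel faces approximate an oblique level line of $w$ by a staircase whose Euclidean length strictly exceeds that of the line, so $\int_\Omega\abs{Dw_h}_2$ systematically overshoots $\int_\Omega\abs{Dw}_2$ and no refinement cures this. To recover the sharp constant one must choose the triangulation $t_h$ \emph{adapted to $w$}: refine so that $\nabla w$ is nearly constant on each triangle and orient a controlled portion of the edges along the level lines of $w$, so that $\sum_{j<k}\abs{w^k-w^j}\,\abs{\nu_{jk}}_2\,\mathcal{H}^{N-1}(\Sigma_{jk})\leq\int_\Omega\abs{\nabla w}_2+o(1)$ as $h\to 0$. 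Constructing this mesh and estimating the resulting interface sum is the geometric heart of \cite{BelLus03,CasKunPol99}; everything else — the Poincar\'e estimate, the Riemann-sum limit, the lower-semicontinuity bound, and the diagonalization — is standard. Combining the $\limsup$ and $\liminf$ bounds gives $\int_\Omega\abs{Dw_h}_p\to\int_\Omega\abs{Dw}_p$, hence $d(w_h,w)\to 0$, and the diagonal argument over $(\varepsilon,h)$ finishes the proof.
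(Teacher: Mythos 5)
You should first be aware that the paper does not prove this statement at all: it is quoted verbatim as the main result of \cite{BelLus03} (triangulations, isotropic case $p=2$) and \cite{CasKunPol99} (parallelepipeds, anisotropic case $p=1$), with the text explicitly saying ``here we only state their main result''. So there is no in-paper proof to compare against; what can be judged is whether your sketch would stand on its own. Its skeleton is sound and matches the strategy of the cited works: reduce to smooth functions by strict density, discretize by cell averages, represent the total variation of a piecewise constant function as a sum of jumps over interfaces weighted by $\abs{\nu_{jk}}_{p}\,\mathcal{H}^{N-1}(\Sigma_{jk})$, use lower semicontinuity for the $\liminf$ and a Riemann-sum argument for the $\limsup$, and diagonalize in $(\varepsilon,h)$. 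Your diagnosis of why a fixed axis-parallel grid fails for $p=2$ (the staircase overshoot) is also exactly the point the paper itself makes right after the theorem.

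However, as a proof your proposal has a genuine gap at precisely the place you flag yourself: for the isotropic case you only assert that one ``must choose the triangulation $t_h$ adapted to $w$,'' with edges following level lines of $w$, so that the interface sum is bounded by $\int_\Omega\abs{\nabla w}_2+o(1)$. That construction, and the estimate showing the adapted interface sum really does not overshoot, is the entire content of B\v{e}l\'{\i}k--Luskin; deferring it to the references means the hardest bullet of the theorem is not proved. Two smaller points also need attention. First, the strict-density theorem in \cite{AmbFusPal00} gives convergence of the \emph{isotropic} total variations $\int_\Omega\abs{Dw_\varepsilon}_2\to\int_\Omega\abs{Du}_2$; to conclude the same for the $l_1$-anisotropic variation (needed in the second bullet) you must invoke Reshetnyak's continuity theorem or redo the mollification estimate for the anisotropic integrand, which you use implicitly without mention. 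Second, in the parallelepiped case the theorem is stated for a \emph{prescribed} partition of mesh size $h$, so your cellwise Poincar\'e constant and the Riemann-sum limit should be checked to depend only on the aspect-ratio bounds of the parallelepipeds, not on a mesh you are free to choose; with that caveat your $p=1$ argument is essentially the one in \cite{CasKunPol99} and is fine.
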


Another type  of approximation of $X$  is the '$J_{\mu}$-approximation property'
employed in \cite{FitKee97} which, adapted  to our notation reads as follows:

$X_n$ is a $\Phi_{\alpha}$-approximation of $X$ if, for each $u\in
X$, there exists a sequence  $\{u_n\}\subset X_n$ such that
$\|u-u_n\|_Z\rightarrow 0$ and $\Phi_\alpha(u_n)\rightarrow
\Phi_\alpha(u)$ as $n\rightarrow\infty$, for any $\alpha\geq 0$,
where $\Phi_\alpha$ is given by
\begin{equation*}
\Phi_\alpha(u)=\norm{F(u)-\yd} { }^2+\alpha \Rcal(u)\;.
\end{equation*}
In \cite{FitKee97} one
aims at approximating minimizers of $\Phi_\alpha$ (which depends on
$\alpha$) in $X$ by minimizers of $\Phi_\alpha$ in $X_n$ for a fixed
$\alpha>0$ , while our aim is to  approximate solutions of the
operator equation  by minimizers of $\Phi_\alpha$ in $X_n$ when  the
regularization parameter $\alpha$ depends  on the dimension $n$.

We summarize the results of this section in an example
\begin{example}
Let $X = BV(\Omega)$ with the weak* topology $\tau$. Moreover, let
$Z=L^1(\Omega)$ and let $d$ be the metric
\begin{equation}\label{metric_bv}
 d(u,v) = \|u-v\|_{L^1(\Omega)}+\left|\int_\Omega |Du| - \int_\Omega |Dv|\right|\;.
\end{equation}
Let $F_m$ and $F: \df \subseteq BV(\Omega) \to
L^2(\hat{\Omega})$
satisfy the related conditions in Assumption \ref{newassump}.

Then according to Theorem \ref{th:BV}, for every $u \in BV(\Omega)$
there exists  an approximating sequence of piecewise constant
functions. Consequently, minimization of the discretized regularized
problem is well--posed, stable, and convergent (cf. Proposition
\ref{pr3.1}). The piecewise constant regularizers $\{umn\}$
approximate the ${\cal R}$-minimizing solution $\bu$ in the sense of
the metric (\ref{metric_bv}).
\end{example}


\subsection{The bounded deformation functions space}

In the following let $\Omega = (0,1)^N$ the open unit cube.
We choose the simple geometry not to be forced to take into
account approximations of $\Omega$, or irregular meshes, for
the finite element method considered below.

The space $BD(\Omega)$ \cite{TemStr80} of functions with \textbf{bounded deformation}
in an open set $\Omega\subset \R^N$ is defined as the set of functions
$\ufett=(u^1,\dots, u^N) \in L^1(\Omega;\R^N)$ such that the symmetric distributional derivative
\begin{equation*}
 E_{ij}\ufett:=\frac{1}{2} (D_i u^j+D_ju^i)
\end{equation*}
is a (matrix-valued) measure with finite total variation in $\Omega$:
\begin{equation*}
 BD(\Omega):=\set{\ufett \in L^1(\Omega;\R^N), E_{ij}(\ufett) \in M_1(\Omega),i,j=1,\dots,N },
\end{equation*}
where $M_1(\Omega)$ denotes the space of bounded measures.
$BD(\Omega)$ is a nonseparable Banach space provided with the norm
\begin{equation*}
\norm{\ufett }{BD}=\norm{ \ufett }{L^1(\Omega;\R^N)}+
\underbrace{ \sum_{i,j}^N \int_\Omega \abs{E_{ij}(\ufett)}}_{=: \int_\Omega \abs{E \ufett}}\;.
\end{equation*}

This space is strictly larger than the space of bounded variation
functions $BV(\Omega;\R^N)$. It was introduced in \cite{Suq78} and
\cite{MatStrChr79} and has been widely considered in the literature
(see \cite{Tem83,TemStr80}) in connection with the mathematical
theory of plasticity. Several  interesting properties of the $BD$
space are as follows: It is the dual of a separable Banach space
(see \cite{TemStr80}); the space $BD(\Omega)$  is continuously
embedded  in $L^p(\Omega;\R^N)$, where $1\leq p\leq \frac{N}{N-1}$.
In addition the space $BD(\Omega;\R^N)$ is not separable -- if it
would be, the space $BV(\Omega;\R^N)$,
which is the subspace of
$BD(\Omega;\R^N)$ where all components $u^j$, $j=2,\ldots,N$ vanish,
would be as well. However, this is not true as stated already in the
previous example.

Let us return to the finite dimensional regularization framework
we investigate in this work.

Consider the setting $X=BD(\Omega)$, $\tau$ the weak$^*$ topology on
$BD(\Omega)$ and $Z=L^1(\Omega;\R^N)$.
We associate with $n\in \N$ the discretization size $h_n:=\frac{1}{n}$,
use multi-indices $\av:=(\alpha_1,\dots,\alpha_N)$, and set
$A:=\set{0,\dots,n}^N$.

We consider the finite-dimensional product space of piecewise linear splines
\begin{equation*}
 X_n:=\set{\ufett_n=\sum_{\alpha \in A}\sum_{k=1}^N u^k_\av \Deltaxih : u_\av^k \in \R}
\end{equation*}
where $\xifett_\av \in  h_n \set{0,1,\ldots,n}^N$,
and $\Delta$ is the following function
\begin{equation*}
 \Delta(\boldsymbol{x}):=\prod_{i=1}^N  \max(0,1-\abs{x^i} )\;.
\end{equation*}
This finite element discretization has already been used in \cite{Cha04}
for numerical minimization of variational energies.
For $\boldsymbol{x} \in (0,h_n)^N$ the derivative of
$\Delta( (\boldsymbol{x}-\xifett_\av )/h_n)$ in direction $\boldsymbol{e}_j$ is given by
\begin{equation*}
 D_j\Delta\left( \frac{\boldsymbol{x}-\boldsymbol{\xi}_\av}{h_n}
\right)=
 \signum(\xifett_\av^j-x^j)\frac{1}{h_n} \prod_{i\not=j}^N
 \left( \max\left(0,1-\frac{\abs{x^i-(\xifett_\av)^i}}{h_n} \right) \right)\;.
\end{equation*}
For every $\av$ with $\alpha_i<n$ and for every $k, 1\leq k\leq N$
define  $A_\av^k:=\set{\beta \in A_\av , \bve_k=\av_k}$.
Additionally define $\Omega_\alpha$ as the $N-$dimensional
cube, spanned by the vectors $\set{\xifett_\av+\boldsymbol{e}_k h_n}_{k=1 \dots N}$, and
$A_\av=\bigcup_{k=1}^N A_\av^k$ (see Figure \ref{fi:alphasets}).

\begin{figure}[h]
\begin{tikzpicture}
 \fill[orange!20!] (-1cm,-1cm) rectangle (1cm,1cm);
\fill (-1 cm,-1 cm) circle (1pt) node[below] {$\xifett_{\av}$};
\fill (1 cm,-1 cm) circle (1pt) node[below] {$\xifett_{\av+(1,0)}$};
\fill (-1 cm,1 cm) circle (1pt) node[above] {$\xifett_{\av+(0,1)}$};
\fill (1 cm,1 cm) circle (1pt) node[above] {$\xifett_{\av+(1,1)}$};
\node at (0cm,0cm) {$\Omega_\alpha$};
\node[right] at (3cm, 0.6cm) {$A_\av=\set{\av,\av + (0,1),\av + (1,0),\av + (1,1)}$};
\node[right] at (3cm, -0.0cm){$A_\av^1=\set{\av, \av + (0,1)}$};
\node[right] at (3cm, -0.6cm){$A_\av^2=\set{\av, \av + (1,0)}$};
\end{tikzpicture}
\caption{an example for the sets $A_\av$}
\label{fi:alphasets}
\end{figure}
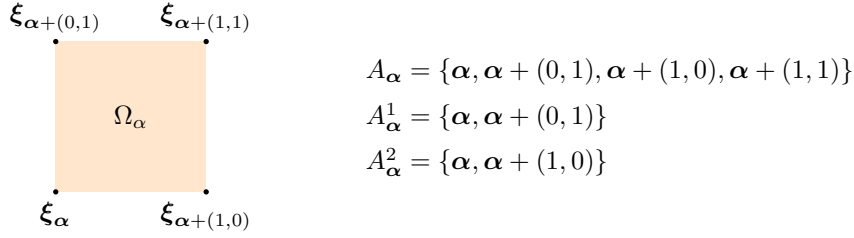
Moreover,
for $\beta \in A_\alpha$
we have
\begin{align}
\int_{\Omega_{\alpha}}
D_j \Delta
\left(\frac{\boldsymbol{x}-\xifett_\beta}{h_n}\right)
d\boldsymbol{x}
&=\begin{cases}
- \left( \frac{h_n}{2} \right)^{N-1} &\text{if }\beta^j=\alpha^j\\
+ \left( \frac{h_n}{2} \right)^{N-1} &\text{if }\beta^j=\alpha^j+1    \;
\end{cases}
\label{eq:sum}
\end{align}

In the following we prove the main result on a pseudometric.
\begin{theorem}\label{th:mainBD}
We assume that $h_n\rightarrow 0$ when $n\rightarrow \infty$. Then for
every $\ufett\in BD(\Omega,\R^N)\cap L^r(\Omega,\R^N),1\leq r < \infty$,
we can find a sequence $\set{\ufett_n}$, with $\ufett_n \in X_n$, such that
\begin{equation*}
 \lim\int_\Omega \abs{\ufett-\ufett_n}^r dx=0 \text{ and }
\lim_{n\rightarrow \infty} \int_\Omega \abs{E \ufett_n}= \int_\Omega \abs{E \ufett}\;.
\end{equation*}
Setting
\begin{equation}\label{eq:dBD}
d( {\ufett_1},{\ufett_2})=\norm{ \ufett_1-\ufett_2 }{L^1(\Omega;\R^N)}+
\abs{\int_\Omega \abs{E \ufett_1}\;-\int_\Omega \abs{E \ufett_2}\;},
\end{equation}
we obtain
$\lim_{n\rightarrow \infty} d(\ufett_n,\ufett)=0$.
\end{theorem}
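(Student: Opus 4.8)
The plan is to establish the two convergences separately and then read off $d(\ufett_n,\ufett)\to 0$ from \eqref{eq:dBD}. For the deformation part it is enough to prove the one-sided bound
\[
\limsup_{n\to\infty}\int_\Omega \abs{E\ufett_n}\ \le\ \int_\Omega \abs{E\ufett}
\]
together with $\ufett_n\to\ufett$ in $L^1(\Omega;\R^N)$: the latter makes $\{\ufett_n\}$ bounded in $BD(\Omega)$, hence weak$^*$ convergent to $\ufett$, so the lower semicontinuity of the total deformation with respect to weak$^*$ convergence supplies $\int_\Omega\abs{E\ufett}\le\liminf_n\int_\Omega\abs{E\ufett_n}$, and equality follows.

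I would construct $\ufett_n\in X_n$ by taking the nodal value $u_\av^k$ to be a local mean of $u^k$ over a cube of side $h_n$ adjacent to $\xifett_\av$ (for a boundary node, over the unique such cube lying in $\Omega$). Then $\ufett\mapsto\ufett_n$ is bounded on $L^r(\Omega;\R^N)$ uniformly in $n$ and converges to the identity on smooth functions, whence $\int_\Omega\abs{\ufett-\ufett_n}^r\,dx\to 0$ by a density and uniform-boundedness argument (the hypothesis $r<\infty$ is used here). For the deformation energy I would work cube by cube: on a fixed cube $\Omega_\av$ the spline $\ufett_n$ is multilinear, so by \eqref{eq:sum} and the partition-of-unity property of the lower-dimensional hat functions on the faces of $\Omega_\av$, the integral over $\Omega_\av$ of each weighted derivative $D_j\Delta(\cdot)$ can be rewritten through the edge increments $u^k_{\boldsymbol{\beta}+\boldsymbol{e}_\ell}-u^k_{\boldsymbol{\beta}}$ along the $2^{N-1}$ edges of $\Omega_\av$ parallel to $\boldsymbol{e}_\ell$, and hence each $\int_{\Omega_\av}\abs{E_{ij}\ufett_n}$ is bounded by $h_n^{N-1}$ times a convex combination of such increments.

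The main obstacle is genuinely $BD$-specific. One cannot bound $\int_{\Omega_\av}\abs{E_{ij}\ufett_n}$ by the individual pieces $\int\abs{D_i u^j_n}$, $\int\abs{D_j u^i_n}$, since for $\ufett\in BD(\Omega)$ a single derivative $D_iu^j$ with $i\ne j$ need not be a measure — only the symmetrized tensor $E\ufett$ is. The device that makes the cell estimate work is the slicing structure of $BD$: for a fixed $\boldsymbol{\xi}$ the scalar $x\mapsto\inner{\ufett(x)}{\boldsymbol{\xi}}$ is, on almost every line parallel to $\boldsymbol{\xi}$, a one-dimensional $BV$ function whose derivative is the slice of the measure $\boldsymbol{\xi}^{\top}(E\ufett)\boldsymbol{\xi}$; taking $\boldsymbol{\xi}=\boldsymbol{e}_i\pm\boldsymbol{e}_j$ controls the off-diagonal combination entering $E_{ij}\ufett_n$ by the diagonal components of $E\ufett$ along the two space diagonals of the relevant two-dimensional faces. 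This is exactly what the combinatorics of the index sets $A_\av^k$ and the signs in \eqref{eq:sum} are arranged to permit: after the averaging above, each edge increment $u^k_{\boldsymbol{\beta}+\boldsymbol{e}_\ell}-u^k_{\boldsymbol{\beta}}$ is replaced by an average of $\abs{E\ufett}$ over an $O(h_n)$-neighbourhood of $\Omega_\av$; summing over $\av$ with bounded overlap and letting $h_n\to 0$ gives $\int_\Omega\abs{E\ufett_n}\le\int_\Omega\abs{E\ufett}+o(1)$, the required $\limsup$ bound.

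Finally, from \eqref{eq:dBD} and H\"older's inequality on the bounded domain $\Omega$,
\[
d(\ufett_n,\ufett)\ \le\ \abs{\Omega}^{1-1/r}\Big(\int_\Omega\abs{\ufett-\ufett_n}^r\,dx\Big)^{1/r}+\Big|\int_\Omega\abs{E\ufett_n}-\int_\Omega\abs{E\ufett}\Big|\ \longrightarrow\ 0 .
\]
A shorter but less self-contained alternative is to reduce first to $\ufett$ smooth by the strict density of $C^\infty(\overline{\Omega};\R^N)$ in $BD(\Omega)$ — upgraded to simultaneous $L^r$ approximation via the usual partition-of-unity mollification, whose correction terms vanish because the partition sums to one — and then note that for smooth $\ufett$ the nodal Lagrange interpolant already satisfies $\nabla\ufett_n\to\nabla\ufett$ uniformly, combining the two by a diagonal argument; I expect the authors' proof to follow the direct route, since it uses precisely the data \eqref{eq:sum}.
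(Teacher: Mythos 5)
Your main argument does not follow the paper's route, and its central step is not actually proved. The paper takes precisely the path you set aside in your final sentence as the ``shorter but less self-contained alternative'': by Lemma \ref{le:Eijproperties} (lower semicontinuity of $\int_\Omega\abs{E_{ij}\cdot}$ under $L^1$ convergence, and Temam's theorem giving $C^\infty(\overline{\Omega})$ approximants with simultaneous $L^r$ and energy convergence) the authors reduce at once to $\ufett\in\mathcal{C}^\infty(\overline{\Omega})$; they then take the nodal Lagrange interpolant $(u_n)^k_\av=u^k(\xifett_\av)$, cite Ciarlet for $\int_\Omega\abs{\ufett-\ufett_n}^r\to 0$, use Lemma \ref{lem:discrete} together with the mean value theorem to rewrite each cell contribution as $\abs{\sum_{\bve\in A^l_\av}D_lu^k(\etafett_{\bve,k,l})+\sum_{\bve\in A^k_\av}D_ku^l(\etafett_{\bve,l,k})}\,h_n^N(1/2)^{N-1}$, and recognize the sum over $\av$ as a Riemann sum converging to $\int_\Omega\abs{D_lu^k+D_ku^l}$ (the diagonal argument combining the two approximation steps is left implicit).

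The direct route you develop instead, for nonsmooth $\ufett\in BD$ with locally averaged nodal values, has a genuine gap exactly where you locate the difficulty. You correctly note that for $k\neq l$ the single derivative $D_lu^k$ is not a measure, so an individual averaged edge increment $u^k_{\bve+\boldsymbol{e}_l}-u^k_\bve$ cannot be bounded by a local average of $\abs{E\ufett}$; yet your key estimate then asserts precisely that (``each edge increment \dots is replaced by an average of $\abs{E\ufett}$ over an $O(h_n)$-neighbourhood''). Slicing only controls symmetric quantities of the form $\inner{\ufett(x+h\xi)-\ufett(x)}{\xi}$ with, say, $\xi=\boldsymbol{e}_k\pm\boldsymbol{e}_l$, where both components are shifted in the same direction from the same base point; the increments produced by \eqref{eq:sum} shift $u^k$ in direction $\boldsymbol{e}_l$ at the nodes of $A^l_\av$ and $u^l$ in direction $\boldsymbol{e}_k$ at the nodes of $A^k_\av$, and showing that after averaging these recombine into such slice quantities, with errors summable over the cells, is the technical heart of a direct proof (this is essentially what Chambolle's cited approximation result has to do); it cannot be dispatched by appealing to ``the combinatorics of $A^k_\av$''. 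As written, the $\limsup$ bound is therefore not established. The surrounding scaffolding is sound --- $\limsup$ plus lower semicontinuity, local means for nonsmooth data, the H\"older step at the end --- but the proposal is incomplete precisely at the $BD$-specific step, which the paper's reduction to smooth functions is designed to avoid.
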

In order to prove this theorem, we need some additional facts on $BD$,
given by the following  Lemmas.

\begin{lemma}\label{lem:discrete}
For every $n\in \N$ the inclusion $X_n\subset BD(\Omega,\R^N)$ holds and for each
$\ufett=(u^1,\dots,u^N)\in X_n$,
\begin{eqnarray*}
  \int_\Omega\abs{E\ufett}&=&
\sum_{\alpha}
\sum_{k=1}^N
\abs{\sum_{\bve \in A_{\alpha}^k}  \left(u^k_{\bve+\boldsymbol{e}_k}- u^k_\bve \right) }
\left(\frac{h_n}{2}\right)^{N-1} \\
&&+
\sum_{\alpha}
\sum_{k\not=l}^N
\frac{1}{2}\abs{
\sum_{\bve \in A_{\alpha}^l}  \left(u^k_{\bve+\boldsymbol{e}_l}- u^k_\bve \right)+
\sum_{\bve \in A_{\alpha}^k}  \left(u^l_{\bve+\boldsymbol{e}_k}- u^l_\bve \right)
}\left(\frac{h_n}{2}\right)^{N-1}
\end{eqnarray*}
\end{lemma}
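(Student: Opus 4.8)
The plan is to handle the inclusion and the identity separately. The inclusion $X_n\subset BD(\Omega,\R^N)$ is the soft part: each component $u^k=\sum_{\bve\in A}u^k_\bve\,\Delta\bigl(\tfrac{\boldsymbol{x}-\xifett_\bve}{h_n}\bigr)$ of an element $\ufett\in X_n$ is a finite linear combination of the globally Lipschitz tent functions $\Delta\bigl(\tfrac{\cdot-\xifett_\bve}{h_n}\bigr)$, hence Lipschitz on $\Omega$; therefore its distributional partial derivatives $D_iu^k$ belong to $L^\infty(\Omega)\subset M_1(\Omega)$, so that each $E_{ij}\ufett=\tfrac12(D_iu^j+D_ju^i)$ is an absolutely continuous (matrix-valued) measure of finite total variation. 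Thus $\ufett\in BD(\Omega,\R^N)$ and $\int_\Omega\abs{E_{ij}\ufett}=\int_\Omega\abs{E_{ij}\ufett(\boldsymbol{x})}\,d\boldsymbol{x}$.

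For the formula I would compute $\int_\Omega\abs{E\ufett}=\sum_{i,j=1}^N\int_\Omega\abs{E_{ij}\ufett(\boldsymbol{x})}\,d\boldsymbol{x}$ cell by cell. Decomposing $\Omega$ into the $n^N$ cubes $\Omega_\av$ (over the multi-indices $\av$ with $\alpha_i<n$) gives $\int_\Omega\abs{E_{ij}\ufett}=\sum_\av\int_{\Omega_\av}\abs{E_{ij}\ufett}$. On the interior of a fixed $\Omega_\av$ only the basis functions attached to the vertices $\bve\in A_\av$ are supported, so $u^k|_{\Omega_\av}=\sum_{\bve\in A_\av}u^k_\bve\,\Delta\bigl(\tfrac{\boldsymbol{x}-\xifett_\bve}{h_n}\bigr)$. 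Using the formula for $D_j\Delta$ recalled before the lemma and grouping the $2^N$ vertices of $\Omega_\av$ into the $2^{N-1}$ pairs $\{\bve,\bve+\boldsymbol{e}_j\}$ with $\bve\in A_\av^j$ — within such a pair the factors in the directions $i\ne j$ coincide while the $\signum$-factor takes opposite values on $\Omega_\av$ — one finds that $D_ju^k$ restricted to $\Omega_\av$ is $\tfrac1{h_n}\sum_{\bve\in A_\av^j}\bigl(u^k_{\bve+\boldsymbol{e}_j}-u^k_\bve\bigr)\pi_\bve(\boldsymbol{x})$, where the weights $\pi_\bve(\boldsymbol{x})=\prod_{i\ne j}\max\bigl(0,1-\abs{x^i-(\xifett_\bve)^i}/h_n\bigr)$ are nonnegative and sum to $1$ on $\Omega_\av$.

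The step I expect to be the main obstacle is the passage from $\int_{\Omega_\av}\abs{D_ju^k}$ to $\bigl|\int_{\Omega_\av}D_ju^k\bigr|$: one has to show that $D_ju^k$ keeps a constant sign on the interior of each cube $\Omega_\av$ (immediate when $N=1$, and, for $N\ge2$, the place where the tensor-product structure of the splines enters), so that the absolute value may be pulled out of the integral. Granting this, \eqref{eq:sum} and linearity give $\int_{\Omega_\av}D_ju^k\,d\boldsymbol{x}=\bigl(\tfrac{h_n}{2}\bigr)^{N-1}\sum_{\bve\in A_\av^j}\bigl(u^k_{\bve+\boldsymbol{e}_j}-u^k_\bve\bigr)$, hence $\int_{\Omega_\av}\abs{D_ju^k}=\abs{\sum_{\bve\in A_\av^j}\bigl(u^k_{\bve+\boldsymbol{e}_j}-u^k_\bve\bigr)}\bigl(\tfrac{h_n}{2}\bigr)^{N-1}$; everything surrounding this is routine bookkeeping.

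It remains to reassemble the sum. For $i=j=k$ one has $E_{kk}\ufett=D_ku^k$, contributing the first family of terms $\sum_\av\sum_{k=1}^N\abs{\sum_{\bve\in A_\av^k}(u^k_{\bve+\boldsymbol{e}_k}-u^k_\bve)}\bigl(\tfrac{h_n}{2}\bigr)^{N-1}$. For an ordered pair $i\ne j$, writing $E_{ij}\ufett=\tfrac12(D_iu^j+D_ju^i)$ and applying the cell computation to $D_iu^j$ and $D_ju^i$ together — keeping the linear combination inside the absolute value — yields, for each $(k,l)$ with $k\ne l$ and each $\av$, the contribution $\tfrac12\abs{\sum_{\bve\in A_\av^l}(u^k_{\bve+\boldsymbol{e}_l}-u^k_\bve)+\sum_{\bve\in A_\av^k}(u^l_{\bve+\boldsymbol{e}_k}-u^l_\bve)}\bigl(\tfrac{h_n}{2}\bigr)^{N-1}$. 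Summing over $\av$ and over all ordered index pairs reproduces the asserted identity; only the cells meeting $\partial\Omega$ and the convention that the shifted indices $\bve+\boldsymbol{e}_j$ remain in $A$ need a last word of care.
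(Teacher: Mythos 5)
Your route is the same as the paper's: split $\Omega$ into the cells $\Omega_\av$, note that on a fixed cell only the basis functions attached to the vertices $\bve\in A_\av$ are active, evaluate the cell integrals through \eqref{eq:sum}, and reassemble over $\av$ and over the index pairs, treating $k=l$ and $k\neq l$ separately; the inclusion $X_n\subset BD(\Omega,\R^N)$ via Lipschitz continuity and the bookkeeping around boundary cells are fine. The one step you isolate and then merely ``grant'' --- replacing $\int_{\Omega_\av}\abs{D_l u^k+D_k u^l}$ by $\abs{\int_{\Omega_\av}\left(D_l u^k+D_k u^l\right)}$ on the strength of a constant-sign claim for the integrand on each cell --- is, however, a genuine gap, because that claim is false for $N\ge 2$. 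As you yourself compute, on a cell $D_l u^k$ is the $\boldsymbol{x}$-dependent convex combination $\frac{1}{h_n}\sum_{\bve\in A_\av^l}\left(u^k_{\bve+\boldsymbol{e}_l}-u^k_\bve\right)\pi_\bve(\boldsymbol{x})$ of the $2^{N-1}$ edge difference quotients in direction $l$, and nothing prevents these quotients from having opposite signs. Concretely, for $N=2$ take $u^2\equiv 0$ and give $u^1$ the vertex values $0,1,1,0$ at $\xifett_\av$, $\xifett_{\av+(1,0)}$, $\xifett_{\av+(0,1)}$, $\xifett_{\av+(1,1)}$; then on $\Omega_\av$ one finds $D_1u^1(\boldsymbol{x})=\frac{1}{h_n}\left(1-2\,\frac{x^2-(\xifett_\av)^2}{h_n}\right)$, which changes sign across the middle of the cell, so $\int_{\Omega_\av}\abs{E_{11}\ufett}=h_n/2>0$ while the corresponding term $\abs{\sum_{\bve\in A_\av^1}\left(u^1_{\bve+\boldsymbol{e}_1}-u^1_\bve\right)}\left(\frac{h_n}{2}\right)^{N-1}=0$. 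Since the asserted identity is a cellwise sum of terms of the form $\abs{\int_{\Omega_\av}f}$, while the left-hand side is the sum of $\int_{\Omega_\av}\abs{f}$, only the inequality ``$\geq$'' follows in general, with equality exactly when each integrand keeps a.e.\ constant sign on each cell.

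You should know, though, that you have not missed an idea that the paper supplies: the paper's own proof performs the same silent interchange when it ``uses \eqref{eq:sum}'' inside the absolute value, passing from $\int_{\Omega_\av}\abs{\sum_{\bve\in A_\av}(\dots)}$ directly to $\abs{\dots}\left(\frac{h_n}{2}\right)^{N-1}$ without any sign argument. So your proposal reproduces the published argument and, by flagging the interchange as the main obstacle, actually pinpoints the one place where that argument is incomplete as stated; to repair either version one would have to restrict to the class of discrete functions for which the sign condition holds, or weaken the lemma to the inequality (which is the direction not already provided by lower semicontinuity in Lemma \ref{le:Eijproperties}), or modify the discrete space.
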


\begin{proof}
From the definition of $\ufett \in X_m$ we obtain
\begin{align*}
 D_l u^k(\boldsymbol{x}) &=
\sum_{\av} u^k_\av D_l \Delta \left( \frac{\boldsymbol{x}- \xifett_\av}{h_n} \right)\;.
\end{align*}
Moreover, since $\Omega=\bigcup_{\av: \alpha_i<n} \Omega_\av$,
we can split up the integral. When integrating $D^lu_k+D^ku_l$ over $\Omega_\alpha$,
we only have to sum over those $\bve$
for which $\Delta\left(\frac{\boldsymbol{x}-\xifett_{ \bve}}{h_n} \right)\not=0$.
Hence we only sum over $\bve \in A_\alpha$ in the inner sum.
\begin{align*}
\int_{\Omega} \abs{D_l u^k + D_k u^l}
&=\sum_\av \int_{\Omega_\av} \abs{D_l u^k + D_k u^l}\\
&=\sum_\av \int_{\Omega_\alpha}
\abs{ \sum_{\bve \in A_\alpha}
\left(
  u^k_\bve D_l \Delta\left( \frac{\boldsymbol{x}- \xifett_\bve}{h_n} \right)+
  u^l_\bve D_k \Delta\left( \frac{\boldsymbol{x}- \xifett_\bve}{h_n} \right)
\right)
} \;.
\end{align*}
Next we use \eqref{eq:sum} and obtain
\begin{align*}
\int_{\Omega_\alpha}
\abs{D_l u^k + D_k u^l } =
\abs{
\sum_{\bve \in A_{\alpha}^l}  \left(u^k_{\bve+\boldsymbol{e}_l}- u^k_\bve\right)+
\sum_{\bve \in A_{\alpha}^k}  \left(u^l_{\bve+\boldsymbol{e}_k}- u^l_\bve\right)
}\left(\frac{h_n}{2}\right)^{N-1}
\end{align*}

The lemma follows from summing over all $\alpha$ and all $l,k=1,\dots,N$.
\end{proof}

\begin{lemma}\label{le:Eijproperties}
 \begin{enumerate}
    \item If $\set{\ufett_n}\subset BD(\Omega)$ and $\ufett_n\rightarrow \ufett$ in $(L^1(\Omega))^N$, then
      \begin{equation*}
      \int_\Omega \abs{E_{ij}\ufett}\leq  \liminf_{n\rightarrow \infty} \int_\Omega \abs{E_{ij} \ufett_n}\;.
      \end{equation*}
    \item For every $\ufett\in BD(\Omega)\cap L^r(\Omega), 1\leq r<\infty$,
      there exists a sequence $\set{u_n}\subset \Ccal^\infty(\overline{\Omega})$
      such that
      \begin{align*}
        \lim_{n\rightarrow \infty}\int \abs{\ufett-\ufett_n}^r dx&=0 &
        \lim_{n\rightarrow \infty}\int_\Omega  \abs{E_{ij}\ufett_n}&=\int_\Omega \abs{E_{ij}\ufett}\;.
       \end{align*}
 \end{enumerate}
\end{lemma}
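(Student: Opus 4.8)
The statement to prove is Lemma~\ref{le:Eijproperties}, which records two standard facts about the bounded deformation seminorm: lower semicontinuity of each component $\int_\Omega |E_{ij}\ufett|$ under $L^1$-convergence, and an approximation result by smooth functions that preserves each component of the deformation energy while converging in $L^r$.

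\medskip

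\textbf{Proof plan for Lemma~\ref{le:Eijproperties}.}

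For the first assertion, the plan is to use the duality (weak$^*$) definition of the total variation of the measure $E_{ij}\ufett$, exactly as one does for $BV$. Writing
\[
\int_\Omega |E_{ij}\ufett| = \sup\set{ \int_\Omega u^j\, D_i\varphi\,dx + \int_\Omega u^i\, D_j\varphi\,dx \;:\; \varphi\in\Ccal_c^1(\Omega),\ \|\varphi\|_\infty\le 2 },
\]
(up to the harmless factor $\tfrac12$ coming from the symmetrization in the definition of $E_{ij}$), I would fix an admissible test function $\varphi$; then $\ufett_n\to\ufett$ in $(L^1(\Omega))^N$ forces $\int_\Omega u_n^j D_i\varphi + \int_\Omega u_n^i D_j\varphi \to \int_\Omega u^j D_i\varphi + \int_\Omega u^i D_j\varphi$ because $D_i\varphi, D_j\varphi\in\Ccal_c(\Omega)\subset L^\infty$. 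Each term on the left is bounded above by $\int_\Omega|E_{ij}\ufett_n|$, so passing to the $\liminf$ and then taking the supremum over $\varphi$ on the left gives the claim. This step is routine; the only care needed is to get the constant in the duality pairing right and to note it does not affect the inequality.

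For the second assertion, the approach is mollification: choose a standard mollifier $\eta_\varepsilon$ and set $\ufett_\varepsilon=\ufett*\eta_\varepsilon$, restricted/defined on $\Omega$. Since $\Omega=(0,1)^N$ is a nice (Lipschitz, star-shaped after translation) domain one can, if necessary, first extend $\ufett$ slightly beyond $\Omega$ or use the interior-approximation trick of Temam--Strang for $BD$ so that $\ufett_\varepsilon\in\Ccal^\infty(\overline\Omega)$; I would cite \cite{TemStr80} (or \cite{Tem83}) for the fact that $BD$ functions can be approximated by smooth functions strictly, i.e.\ with convergence of the total deformation. The $L^r$ convergence $\int_\Omega|\ufett-\ufett_\varepsilon|^r\to 0$ is the classical property of mollification for $L^r$ functions, $1\le r<\infty$. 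For the energy convergence, $E_{ij}(\ufett_\varepsilon) = (E_{ij}\ufett)*\eta_\varepsilon$ as measures convolved with a smooth kernel, so $\int_\Omega|E_{ij}\ufett_\varepsilon|\,dx \le \int_\Omega |E_{ij}\ufett|$ by Jensen/Fubini (giving $\limsup \le$), while the reverse inequality $\int_\Omega|E_{ij}\ufett|\le\liminf\int_\Omega|E_{ij}\ufett_\varepsilon|$ is exactly the first part of the lemma applied to the sequence $\ufett_\varepsilon$ (which converges to $\ufett$ in $L^1$). Combining the two gives $\int_\Omega|E_{ij}\ufett_\varepsilon|\to\int_\Omega|E_{ij}\ufett|$, and then extracting a sequence $\varepsilon_n\to 0$ yields the stated $\set{\ufett_n}$.

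\medskip

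The main obstacle is the boundary: a bare mollification $\ufett*\eta_\varepsilon$ is only defined on a shrinking subdomain, and naively restricting it loses mass of $E_{ij}\ufett$ that sits near $\partial\Omega$, which would break the equality $\int_\Omega|E_{ij}\ufett_n|\to\int_\Omega|E_{ij}\ufett|$ (one would only get $\le$, and the reverse lower-semicontinuity bound would not close the gap because the limit energy could be strictly smaller). The clean fix is to invoke the known strict/area-strict approximation theorem for $BD$ (Temam--Strang; see also \cite{Tem83,TemStr80}), which produces smooth approximants on all of $\overline\Omega$ with $\int_\Omega|E\ufett_n|\to\int_\Omega|E\ufett|$; a small additional argument (a localized version of the same construction, or the observation that the individual measures $E_{ij}\ufett$ are controlled by $E\ufett$) upgrades the convergence of the total energy to convergence of each component $\int_\Omega|E_{ij}\ufett_n|$, using part~1 of the lemma for the $\liminf$ direction of each component simultaneously with $\sum_{i,j}$-convergence of the total. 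I would present this componentwise upgrade explicitly since it is the one non-textbook point; everything else is a citation plus standard mollifier estimates.
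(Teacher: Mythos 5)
Your proposal is correct and follows essentially the same route as the paper: part 1 is the standard lower-semicontinuity argument for the total variation of the measures $E_{ij}\ufett$ (the paper simply invokes "standard properties of convex measures"), and part 2 ultimately rests on the smooth strict-approximation theorem for $BD$ from \cite{Tem83} (Th\'eor\`eme 3.2, Chapitre II), which is exactly the paper's citation. Your extra observation --- that convergence of the total deformation energy together with the componentwise $\liminf$ inequality from part 1 forces convergence of each individual term $\int_\Omega\abs{E_{ij}\ufett_n}$ --- is a valid and worthwhile detail that the paper's one-line proof leaves implicit.
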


\begin{proof}
 \begin{enumerate}
  \item Follows from standard properties of convex measures.
  \item See \cite[Theoreme 3.2, Chapitre II]{Tem83}.
\end{enumerate}
\end{proof}

Now we are ready for the proof of Theorem \ref{th:mainBD}.

\begin{proof} [Theorem \ref{th:mainBD}]
The $L^p$-convergence of the picewise polinomial functions $\ufett_n$ can be found in 
 the book of Ciarlet \cite{Cia02}.
Due to Lemma \ref{le:Eijproperties}, we can assume that $\ufett \in \mathcal{C}^\infty (\overline \Omega)$.
Set the coefficient of $\ufett_n$ as $(u_n)^k_\av:=u^k(\xifett_\av)$, then we have
\begin{equation*}
\ufett_n=\sum_{k=1}^N \sum_{\av}  u^k(\xifett_\av) \Deltaxih\;.
\end{equation*}
From Lemma \ref{lem:discrete} it follows that
\begin{equation}\label{eq:Ekl}
\begin{aligned}
\int_{\Omega_\alpha} \abs{E_{kl} \ufett_n}
= & \int_{\Omega_\alpha} \abs{D_l u^k_n + D_k u^l_n } \\
= & \abs{
\sum_{\bve \in A_{\alpha}^l}  \left(u^k(\xifett_{\bve+\boldsymbol{e}_l})- u^k(\xifett_\bve)\right)+
\sum_{\bve \in A_{\alpha}^k}  \left(u^l(\xifett_{\bve+\boldsymbol{e}_k})- u^l(\xifett_\bve) \right)
}\left(\frac{h_n}{2}\right)^{N-1}
\end{aligned}
\end{equation}
Next we use the mean value theorem: 
For $m,n \in \set{l,k}, m\not= n$ we can find points $\etafett_{\bve,m,n}$ between
$\xifett_{\bve}$ and $\xifett_{\bve+\boldsymbol{e}_m}$ such that
\begin{align*}
u^k(\xifett_{\bve+\boldsymbol{e}_l})- u^k(\xifett_\bve) &= h_n
D_l u^k(\etafett_{\bve,k,l})\;,\\
u^l(\xifett_{\bve+\boldsymbol{e}_k})- u^l(\xifett_\bve) &= h_n
D_k u^l(\etafett_{\bve,l,k})\;,
\end{align*}
hence from \eqref{eq:Ekl} it follows that
\begin{align*}
 \int_{\Omega_\alpha} \abs{E_{kl} \ufett_n}&= \abs{
\sum_{\bve \in A_{\alpha}^l}    D_l u^k(\etafett_{\bve,k,l})+
\sum_{\bve \in A_{\alpha}^k}   D_k u^l(\etafett_{\bve,l,k})
}\underbrace{h_n^N}_{\abs{\Omega_\av}} \left(\frac{1}{2}\right)^{N-1}
\end{align*}
Summing over all $\alpha$ and $l,k=1,\dots N$ and taking the limit
$h_n\rightarrow 0$ we have that
\begin{equation*}
\lim_{n\rightarrow \infty} \int_\Omega \abs{E_{ij} \ufett_n} =
\int_\Omega \abs{E_{ij} \ufett } \;.
\end{equation*}
\end{proof}

We summarize the results of this section as follows:

\begin{example}\label{example_bd}
Let $X =BD (\Omega)$ and $\tau$ the weak* topology.
Moreover, let $Z=L^1(\Omega,\R^N)$ and let
$d$ be the metric
\begin{equation}\label{metric_BD}
 d(\ufett,\boldsymbol{v}) = \|\ufett-\boldsymbol{v}\|_{L^1(\Omega,\R^N)}
 + \left|\     \int_\Omega \abs{E\ufett} - \int_\Omega \abs{E \boldsymbol{v}}  \right|\;.
\end{equation}
Let $F_m$ and $F: \df \subseteq BV(\Omega) \to L^2(\hat{\Omega})$  satisfy the related conditions in Assumption \ref{newassump}.

Take $h=1/n$. Then according to Theorem \ref{th:mainBD}, for every 
$\ufett \in BD(\Omega,\R^N)\cap L^r(\Omega,\R^N)$, $1\leq r<\infty$
there exists an approximating sequence
of piecewise constant functions in the sense of metric $d$.
Consequently, minimization of the discretized regularized problem is well--posed, stable,
and convergent according to Proposition \ref{pr3.1}. 
The piecewise constant regularizers $\{ \ufett_{m,n}^{\alpha,\delta}  \}$ approximate the ${\cal R}$-minimizing
solution $\overline \ufett $ in the sense $\lim \ufett_{m,n}^{\alpha,\delta}=\overline \ufett$ in the weak star topology and  
$\lim  \int_\Omega \abs{E\ufett_{m,n}^{\alpha,\delta}}  =\int_\Omega \abs{ E \overline \ufett}$.
\end{example}


\subsection{The $L^\infty$ space}

In this section we analyze the following regularization method
\begin{equation*}
 u \to \|F(u)-\yd\|^2 + \alpha \|u\|_{\infty}\;.
\end{equation*}

In the sequel we show that there exist finite dimensional subspaces
$\{X_n\}$ of $L^{\infty}(\Omega)$ satisfying equality
(\ref{closure}). More precisely, there exist finite dimensional
subspaces $\{X_n\}$ of $L^{\infty}(\Omega)$ such that for any $u\in
L^{\infty}(\Omega)$ one can find $u_n\in X_n$, $n\in\mathbb{N}$ with
\begin{equation}\label{approx_inf}
\lim_{n\rightarrow \infty}\left(\norm{u_n-u}{L^p} + |\|u_n\|_{\infty}-\|u\|_{\infty}|\right)=0, \quad p\in [1,+\infty).
\end{equation}

It is known that $L^{\infty}(\Omega)$ is not separable, while
$L^p(\Omega)$, for every $p\in[1,+\infty)$, is separable. However,
every function in $L^{\infty}(\Omega)$ can be approximated uniformly
and thus, in the $L^{\infty}(\Omega)$ - norm by a sequence of simple
functions. The proof of this statement is constructive. However, it
provides a nonlinear approximation, in the sense that the piecewise
constant functions which approximate the $L^{\infty}(\Omega)$
function do not yield linear subspaces - see, e.g., \cite[Section
3.2]{DeV98}. This does not fit the theoretical framework we consider
here. An alternative is to consider approximations of
$L^{\infty}(\Omega)$ functions by piecewise constant functions in a
weaker topology, as shown in the sequel.

For the sake of simplicity let $\Omega =(0,1)^N$.
Assume that $\set{\Omega_j}$ is a decomposition of $\Omega$  in
parallelepipeds with maximal diagonal length $h_n$ as in  \cite{CasKunPol99}, and consider the
following finite dimensional subspaces of $L^{\infty}(\Omega)$:
\begin{equation*}
  X_n=\set{u_n =\sum_{j=1}^n u^j\chi_{\Omega_j}: u^j \in \R, 1\leq j\leq n}\;.
\end{equation*}

The following results are essential in proving the main statement of this section:

First denote by $J_{\epsilon}\ast u$ the mollification of $u$, for every $\epsilon>0$.

\begin{theorem}\label{mollif}[p. 36 \cite{Ada03}] Let $u$ be a function which is defined on ${\mathbb{R}}^N$ and vanishes identically outside $\Omega$.

If $u\in L^p(\Omega)$, then $J_{\epsilon}\ast u\in C^{\infty}({\mathbb{R}}^N)$, in fact $J_{\epsilon}\ast u\in C^{\infty}(\overline \Omega)$
and  $J_{\epsilon}\ast u\in L^p(\Omega)$, for every $p\in [1,+\infty)$. Also,
\begin{equation}\label{lp}
 \lim_{\epsilon\rightarrow 0_+}\|J_{\epsilon}\ast u-u\|_p=0,
\end{equation}
\begin{equation}\label{ineq}
 \|J_{\epsilon}\ast u\|_p\leq \|u\|_p, \quad \text{for all }\;\epsilon>0.
\end{equation}
\end{theorem}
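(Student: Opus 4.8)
The plan is to recall the explicit construction of the Friedrichs mollifier and to verify the four assertions one by one, using Young's convolution inequality as the central tool. Fix a nonnegative $J\in C_c^\infty(\mathbb{R}^N)$ with support in the closed unit ball and $\int_{\mathbb{R}^N}J\,dx=1$, and set $J_\epsilon(x):=\epsilon^{-N}J(x/\epsilon)$, so that $J_\epsilon\geq 0$, $J_\epsilon$ has support in $\overline{B(0,\epsilon)}$, and $\|J_\epsilon\|_{L^1(\mathbb{R}^N)}=1$ for every $\epsilon>0$. Extending $u$ by zero outside $\Omega$, we have $u\in L^p(\mathbb{R}^N)$, so Young's inequality gives
\[
\|J_\epsilon\ast u\|_{L^p(\mathbb{R}^N)}\leq\|J_\epsilon\|_{L^1(\mathbb{R}^N)}\|u\|_{L^p(\mathbb{R}^N)}=\|u\|_{p},
\]
which, after restriction to $\Omega$, is exactly inequality (\ref{ineq}); in particular $J_\epsilon\ast u\in L^p(\Omega)$.

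For the smoothness, I would differentiate under the integral sign. Since $J_\epsilon\in C_c^\infty(\mathbb{R}^N)$ and $u\in L^1_{\mathrm{loc}}(\mathbb{R}^N)$, for every multi-index $\beta$ the function $x\mapsto\int_{\mathbb{R}^N}(D^\beta J_\epsilon)(x-y)u(y)\,dy$ is well defined and, by dominated convergence (the integrand is dominated by $\|D^\beta J_\epsilon\|_\infty\,|u|$ restricted to a fixed compact set as $x$ ranges over a bounded region), continuous in $x$; a standard difference-quotient argument then shows it equals $D^\beta(J_\epsilon\ast u)$. Iterating over all $\beta$ gives $J_\epsilon\ast u\in C^\infty(\mathbb{R}^N)$, and restricting to $\overline\Omega$ gives $J_\epsilon\ast u\in C^\infty(\overline\Omega)$. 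It is precisely here that the hypothesis ``$u$ defined on $\mathbb{R}^N$ and vanishing outside $\Omega$'' is used, so that no boundary terms interfere.

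For the convergence (\ref{lp}), I would run the usual approximate-identity argument. If $\varphi\in C_c(\mathbb{R}^N)$ then $\varphi$ is uniformly continuous, and from $J_\epsilon\ast\varphi(x)-\varphi(x)=\int_{\mathbb{R}^N}J_\epsilon(z)\bigl(\varphi(x-z)-\varphi(x)\bigr)\,dz$ one gets $\|J_\epsilon\ast\varphi-\varphi\|_\infty\leq\sup_{|z|\leq\epsilon}\|\varphi(\cdot-z)-\varphi\|_\infty\to 0$; since all supports stay inside a fixed bounded set, this also forces $\|J_\epsilon\ast\varphi-\varphi\|_p\to 0$ as $\epsilon\to 0$. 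For general $u\in L^p(\mathbb{R}^N)$, given $\eta>0$ choose $\varphi\in C_c(\mathbb{R}^N)$ with $\|u-\varphi\|_p<\eta$ and use (\ref{ineq}) applied to $u-\varphi$ to obtain $\|J_\epsilon\ast u-u\|_p\leq\|J_\epsilon\ast(u-\varphi)\|_p+\|J_\epsilon\ast\varphi-\varphi\|_p+\|\varphi-u\|_p\leq 2\eta+\|J_\epsilon\ast\varphi-\varphi\|_p$; letting $\epsilon\to 0$ and then $\eta\to 0$ yields (\ref{lp}).

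The only genuinely delicate point is this last step: passing from compactly supported continuous functions to arbitrary $L^p$ functions relies on the density of $C_c(\mathbb{R}^N)$ in $L^p(\mathbb{R}^N)$, valid for $p<\infty$ (which is exactly why the endpoint $p=\infty$ must be excluded from the statement), combined with the uniform-in-$\epsilon$ bound (\ref{ineq}). Everything else is a routine application of Fubini's theorem, dominated convergence, and Young's inequality. Since the statement is entirely classical, it is natural — as the authors do — simply to cite \cite{Ada03}; the sketch above reconstructs the argument.
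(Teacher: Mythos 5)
Your argument is correct and is the standard mollification proof --- Young's inequality for (\ref{ineq}), differentiation under the integral sign for smoothness, and the approximate-identity-plus-density argument for (\ref{lp}) --- which is essentially the argument in the reference the paper relies on; the paper itself gives no proof, citing \cite{Ada03} instead, so there is nothing to diverge from. One cosmetic remark: in the three-term estimate you should invoke the Young bound on $\mathbb{R}^N$ rather than (\ref{ineq}) itself, since $u-\varphi$ need not vanish outside $\Omega$; this changes nothing in substance.
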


\begin{lemma}\label{max} For every $n\in\mathbb{N}$, one has $X_n\subset L^{\infty}(\Omega) $  and, for each $u_n\in X_n$,
\begin{equation*}
 \|u_n\|_{\infty}=\max_{1\leq j\leq n} |u^j|.
\end{equation*}
\end{lemma}

\begin{proof}
 Follows immediately from the definition of $u_n$, taking into account that $\mu(\Omega_j)>0$, for $1\leq j\leq n$.
\end{proof}

\begin{lemma}\label{converge} If $\{u_n\}\subset L^{\infty}(\Omega)$ and $u\in L^{\infty}(\Omega)$
are such that $\norm{u-u_n}{L^p} \rightarrow 0$ as $n\rightarrow
\infty$, for some $p\in [1,+\infty)$ and
$\|u_n\|_{\infty}\leq\|u\|_{\infty}$ for every $n\in\mathbb{N}$,
then $\{u_n\}$ converges weakly$^*$ to $u$ and
$\|u_n\|_{\infty}\rightarrow\|u\|_{\infty}$ as $n\rightarrow
\infty$.
\end{lemma}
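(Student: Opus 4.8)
The plan is to establish the two conclusions — weak$^*$ convergence and convergence of the $L^\infty$-norms — by combining the given $L^p$-convergence with the norm bound $\|u_n\|_\infty\le\|u\|_\infty$, which provides exactly the compactness and the (semi)continuity needed.

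First I would handle the weak$^*$ convergence. Since $\{u_n\}\subset L^\infty(\Omega)$ is bounded by $\|u\|_\infty$ and $L^\infty(\Omega)=(L^1(\Omega))^*$ with $L^1(\Omega)$ separable, the Banach--Alaoglu theorem gives weak$^*$ sequential relative compactness: every subsequence of $\{u_n\}$ has a further subsequence converging weak$^*$ to some $v\in L^\infty(\Omega)$. I would then identify $v=u$ by testing against a dense enough class of functions: for any $\varphi\in L^2(\Omega)\subset L^1(\Omega)$ (say, or just $\varphi\in L^\infty(\Omega)$, which lies in $L^{p'}(\Omega)$), we have $\int_\Omega u_n\varphi\,dx\to\int_\Omega v\varphi\,dx$ along the subsequence by weak$^*$ convergence, while $\int_\Omega u_n\varphi\,dx\to\int_\Omega u\varphi\,dx$ because $u_n\to u$ in $L^p(\Omega)$ and $\varphi\in L^{p'}(\Omega)$ (Hölder). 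Hence $v=u$. Since every subsequence has a further subsequence with the same weak$^*$ limit $u$, the whole sequence $\{u_n\}$ converges weak$^*$ to $u$.

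Next I would prove $\|u_n\|_\infty\to\|u\|_\infty$. The upper estimate $\limsup_n\|u_n\|_\infty\le\|u\|_\infty$ is immediate from the hypothesis $\|u_n\|_\infty\le\|u\|_\infty$ for all $n$. For the lower estimate I would use weak$^*$ lower semicontinuity of the norm on $L^\infty(\Omega)$ (a dual norm is always weak$^*$ lower semicontinuous), which together with the weak$^*$ convergence $u_n\rightharpoonup^* u$ just established gives $\|u\|_\infty\le\liminf_n\|u_n\|_\infty$. Combining the two inequalities yields $\lim_n\|u_n\|_\infty=\|u\|_\infty$.

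There is no serious obstacle here; the lemma is essentially a packaging of standard duality facts. The only point that needs a little care is the identification of the weak$^*$ limit: one must make sure the test functions used both are legitimate elements of the predual $L^1(\Omega)$ and interact correctly with the $L^p$-convergence — which is why I would test against $L^{p'}(\Omega)\cap L^1(\Omega)$, e.g. bounded functions or $L^2$-functions on the bounded domain $\Omega$, rather than against all of $L^1(\Omega)$ directly. Everything else is routine.
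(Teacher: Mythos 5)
Your proof is correct and follows essentially the same route as the paper: Alaoglu--Bourbaki compactness from the uniform bound, identification of the weak$^*$ limit with $u$ via the $L^p$-convergence, a subsequence argument to get convergence of the whole sequence, and weak$^*$ lower semicontinuity of the $L^\infty$-norm combined with the hypothesis $\|u_n\|_\infty\le\|u\|_\infty$ to conclude norm convergence. Your care in choosing test functions in $L^{p'}(\Omega)\cap L^1(\Omega)$ makes the limit identification slightly more explicit than the paper's, but the argument is the same.
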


\begin{proof}
Since $\set{\|u_n\|_{\infty}}$ is bounded, there exists a subsequence $\{u_k\}$ which converges weakly$^*$ to
some $v\in L^{\infty}(\Omega)$, cf.  Alaoglu-Bourbaki Theorem, \cite[p. 70]{Hol75}.
By the definition of weak$^*$ convergence, one obtains that $\{u_k\}$ converges also weakly, with respect to $L^p(\Omega)$
to some $v$. Therefore $u=v$. In fact, every subsequence of $\{u_n\}$ converges weakly$^*$ to $u$, which yields weak$^*$ convergence
of the entire sequence $\{u_n\}$ to $u$. In addition, the weak$^*$ lower semi continuity of the $L^\infty$-norm
implies $\|u\|_{\infty}\leq\liminf_{n\rightarrow\infty}\|u_n\|_{\infty}$. Thus the assertions are proved.
\end{proof}

Note that the previous result can be relaxed by assuming only weak convergence of $\{u_n\}$ in $L^p(\Omega)$.

\begin{lemma}\label{density_cont} For every $u\in C^{\infty}(\bar \Omega)$, there exists a sequence $\{u_n\}$ with $u_n\in X_n$ such that
  \[
\lim_{n\rightarrow \infty}\left(\norm{u_n-u}{L^p} + |\|u_n\|_{\infty}-\|u\|_{\infty}|\right)=0,\quad p\in [1,+\infty).
\]
 \end{lemma}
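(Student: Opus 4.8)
The plan is to construct, for a given $u \in C^\infty(\bar\Omega)$, explicit piecewise constant approximants $u_n \in X_n$ by sampling $u$ at well-chosen points of each cell $\Omega_j$, and then verify the two required convergences: $L^p$-convergence and convergence of the $L^\infty$-norms. I would first set $u_n := \sum_{j=1}^{n} u(\xi_j)\chi_{\Omega_j}$, where $\xi_j$ is a point in $\Omega_j$ at which $|u|$ attains its maximum over $\bar\Omega_j$ (such a point exists since $u$ is continuous and $\bar\Omega_j$ is compact). The fact that $u$ is uniformly continuous on the compact set $\bar\Omega$ is the engine of the whole argument: given $\varepsilon>0$ there is $\eta>0$ such that $|u(x)-u(y)| < \varepsilon$ whenever $|x-y| < \eta$, and for $n$ large enough the diagonal $h_n$ of every cell is below $\eta$.

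For the $L^p$ part, I would estimate $\norm{u-u_n}{L^p}^p = \sum_{j=1}^n \int_{\Omega_j} |u(x) - u(\xi_j)|^p\,dx \le \varepsilon^p \sum_{j=1}^n \mu(\Omega_j) = \varepsilon^p \mu(\Omega)$ once $h_n < \eta$, which gives $\norm{u-u_n}{L^p} \to 0$. For the $L^\infty$ part, by Lemma \ref{max} we have $\|u_n\|_\infty = \max_{1\le j\le n} |u(\xi_j)| = \max_{1\le j\le n}\, \max_{x\in\bar\Omega_j}|u(x)| = \max_{x\in\bar\Omega}|u(x)| = \|u\|_\infty$, using that the cells cover $\bar\Omega$ and the choice of $\xi_j$. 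So in fact with this construction $|\|u_n\|_\infty - \|u\|_\infty| = 0$ for every $n$, not merely in the limit, and both terms in the claimed limit go to zero.

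Alternatively, if one prefers not to invoke attainment of the max on each closed cell boundary (cells may be half-open in the decomposition), I would instead take $\xi_j$ to be the center of $\Omega_j$ (or any fixed interior point), obtaining $\norm{u-u_n}{L^p}\to 0$ exactly as above, and then argue the $L^\infty$ convergence via Lemma \ref{converge}: one has $\|u_n\|_\infty = \max_j |u(\xi_j)| \le \|u\|_\infty$ trivially, and $\norm{u-u_n}{L^p}\to 0$, so Lemma \ref{converge} yields $\|u_n\|_\infty \to \|u\|_\infty$. This route is cleaner because it reuses the two preceding lemmas directly and sidesteps any delicacy about whether the pieces $\Omega_j$ are closed.

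The main obstacle is essentially bookkeeping rather than conceptual: one must make sure the chosen sample points together with the covering property $\bigcup_j \bar\Omega_j \supseteq \bar\Omega$ give the correct sup, and that the estimate $h_n\to 0$ is uniform over all $n$ cells so that uniform continuity applies simultaneously to every term of the sum. Using the Lemma \ref{converge} route removes even this mild difficulty, so I would present that version: define $u_n$ by sampling at cell centers, prove $L^p$-convergence from uniform continuity and $\sum_j\mu(\Omega_j)=\mu(\Omega)$, note $\|u_n\|_\infty\le\|u\|_\infty$, and invoke Lemma \ref{converge} to conclude.
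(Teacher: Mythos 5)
Your proposal is correct and, in the version you choose to present, essentially identical to the paper's proof: sample $u$ at the cell centers, get $\norm{u_n-u}{L^p}\to 0$ (the paper cites Ciarlet where you argue directly by uniform continuity), bound $\|u_n\|_\infty\le\|u\|_\infty$ via Lemma \ref{max}, and conclude with Lemma \ref{converge}. Your first variant (sampling at points where $|u|$ attains its cellwise maximum, making the $L^\infty$ term vanish identically) is a fine alternative, but it is not needed and the route you settle on matches the paper.
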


\begin{proof}  Let $\xi_j$ be the gravity centers of the parallelepipeds $\Omega_j$.
 Define
\[
u_n=\sum_{j=1}^n u(\xi_j)\chi_{\Omega_j}.
\]
Then $\norm{u_n-u}{L^p}\rightarrow 0$ - see, for instance the book of Ciarlet \cite{Cia02}.
Moreover, by using Lemma \ref{max},
\[
\|u_n\|_{\infty}=\max_{1\leq j\leq n} |u(\xi_j)|\leq  \max_{x\in\bar\Omega} |u(x)|=\|u\|_{\infty},\quad \text{ for all }\; n\in\mathbb{N},
\]
where the last equality holds due to the continuity of $u$ on $\bar\Omega$. Thus, Lemma \ref{converge} applies and yields  $|\|u_n\|_{\infty}-\|u\|_{\infty}|\rightarrow 0$ as $n\rightarrow \infty$.
\end{proof}

\begin{theorem}\label{density_l}
Assume that $h_n\rightarrow 0$ when $n\rightarrow \infty$. Then for every $u\in L^{\infty}(\Omega) $  one can find    $u_n\in X_n$ such that  (\ref{approx_inf}) holds.
\end{theorem}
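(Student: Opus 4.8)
The plan is to chain together the two approximation steps already prepared in the lemmas above: first mollify an arbitrary function of $L^{\infty}(\Omega)$ into a smooth one, then interpolate the smooth function by elements of $X_n$, and finally extract a diagonal sequence. A preliminary observation that streamlines the diagonalization: since $\Omega$ has finite measure and all the functions that will occur are uniformly bounded in $L^{\infty}(\Omega)$, convergence in $L^{1}(\Omega)$ together with such a uniform bound automatically yields convergence in every $L^{p}(\Omega)$, $p\in[1,+\infty)$ (interpolate the exponent). Hence it suffices to control the $L^{1}$-error and the $L^{\infty}$-norms.

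First I would extend $u$ by zero outside $\Omega$ and set $v_k:=J_{1/k}\ast u$. By Theorem \ref{mollif}, $v_k\in C^{\infty}(\overline\Omega)$ and $\norm{v_k-u}{L^p}\to 0$ for every $p\in[1,+\infty)$, while (\ref{ineq}) gives $\norm{v_k}{L^p}\le\norm{u}{L^p}$; letting $p\to\infty$ (or invoking Young's convolution inequality, since the mollifier integrates to $1$) upgrades this to $\|v_k\|_{\infty}\le\|u\|_{\infty}$ for all $k$. At this stage the sequence $\{v_k\}$ meets the hypotheses of Lemma \ref{converge}, which therefore gives $\|v_k\|_{\infty}\to\|u\|_{\infty}$. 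Thus $\norm{v_k-u}{L^p}+\abs{\|v_k\|_{\infty}-\|u\|_{\infty}}\to 0$ as $k\to\infty$, for every $p$.

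Next, for each fixed $k$, Lemma \ref{density_cont} applied to the smooth function $v_k$ produces $w^k_n\in X_n$ with $\norm{w^k_n-v_k}{L^p}+\abs{\|w^k_n\|_{\infty}-\|v_k\|_{\infty}}\to 0$ as $n\to\infty$, and with $\|w^k_n\|_{\infty}$ eventually bounded by $\|v_k\|_{\infty}+1\le\|u\|_{\infty}+1$. A routine diagonal argument then closes the proof: pick $k_j$ so that $\norm{v_{k_j}-u}{L^1}+\abs{\|v_{k_j}\|_{\infty}-\|u\|_{\infty}}<\tfrac1{2j}$, then a strictly increasing threshold $N_j$ so that $\norm{w^{k_j}_n-v_{k_j}}{L^1}+\abs{\|w^{k_j}_n\|_{\infty}-\|v_{k_j}\|_{\infty}}<\tfrac1{2j}$ for all $n\ge N_j$, and set $u_n:=w^{k_j}_n\in X_n$ whenever $N_j\le n<N_{j+1}$. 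The triangle inequality then bounds $\norm{u_n-u}{L^1}+\abs{\|u_n\|_{\infty}-\|u\|_{\infty}}$ by $\tfrac1j$ on that range, and $\{u_n\}$ is uniformly bounded by $\|u\|_{\infty}+1$; by the preliminary observation this yields (\ref{approx_inf}) for every $p\in[1,+\infty)$. Note that nestedness of the $X_n$ is never used, since $u_n$ is by construction already an element of $X_n$.

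I expect the only genuinely delicate point to be the upgrade from the trivial estimate $\|v_k\|_{\infty}\le\|u\|_{\infty}$ to the convergence $\|v_k\|_{\infty}\to\|u\|_{\infty}$ --- that is, the interplay of the uniform $L^{\infty}$-bound with the weak$^*$ lower semicontinuity of the $L^{\infty}$-norm. This is exactly what Lemma \ref{converge} was isolated for; everything else is the standard mollify-then-interpolate construction together with a diagonal extraction.
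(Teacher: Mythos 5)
Your proposal is correct and takes essentially the same route as the paper: mollify to obtain smooth approximants with $\|v_k\|_{\infty}\le\|u\|_{\infty}$, upgrade this to $\|v_k\|_{\infty}\to\|u\|_{\infty}$ via Lemma \ref{converge}, and then pass to $X_n$ via Lemma \ref{density_cont}. Your explicit diagonal extraction and the $L^1$-plus-uniform-bound interpolation to cover all $p\in[1,+\infty)$ simply spell out what the paper compresses into ``the conclusion follows immediately.''
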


\begin{proof}
Let $u\in L^{\infty}(\Omega)$ and $p\in[1,+\infty)$.  By Theorem \ref{mollif}, there exists $\{u_j\}\subset C^{\infty}({\mathbb{R}}^N)$, in fact in $C^{\infty}(\bar\Omega)$, with $\{u_j\}\subset L^p(\Omega)$ such that
\[
\lim_{j\rightarrow \infty}\|u_j-u\|_p=0,\,\,\,\,\,\mbox{and}\,\,\,\,\,\|u_j\|_p\leq \|u\|_p,\quad \text{ for all }\; j\in\mathbb{N}.
\]
By letting $p\rightarrow+\infty$ in the last inequality and using Theorem 2.8 on p. 25 in \cite{Ada75},  one also has
\begin{equation*}
 \|u_j\|_{\infty}\leq \|u\|_\infty, \quad \text{ for all }\; j\in\mathbb{N}.
\end{equation*}
Lemma \ref{converge} applies and yields $\|u_j\|_{\infty}\rightarrow\|u\|_{\infty}$ as $j\rightarrow \infty$.

By consequence, every function in $L^{\infty}(\Omega)$ can be approximated  in the sense stated at (\ref{approx_inf})
by functions from $C^{\infty}(\overline \Omega)$. Since every function $v\in C^{\infty}(\bar \Omega)$ can be approximated by
$u_n\in X_n$ as in \eqref{approx_inf} due to Lemma \ref{density_cont}, the conclusion follows immediately.
\end{proof}

\begin{remark}
One can define  the subspaces $X_n$ in the previous theorem also by means of piecewise polynomial
functions of degree no bigger than one in each variable, which are continuous on $\bar \Omega$.
Moreover, one can employ $n$-simplices instead of parallelipipeds and piecewise linear functions,
according to Remark 3.8 in \cite{CasKunPol99}.
\end{remark}

We summarize the results of this section as follows:

\begin{example}\label{example_infty}
Let $X =L^{\infty} (\Omega)$ and $\tau$ the weak* topology.
Moreover, let $Z=L^p(\Omega)$, $p\in (1,+\infty)$ and let
$d$ be the metric
\begin{equation}\label{metric_inf}
 d(u,v) = \|u-v\|_{L^p(\Omega)}+|\|u\|_{\infty}-\|v\|_{\infty}|\;.
\end{equation}
Let $F_m$ and $F: \df \subseteq BV(\Omega) \to L^2(\hat{\Omega})$  satisfy the related conditions in Assumption \ref{newassump}.

Take $h=1/n$. Then according to Theorem \ref{density_l}, for every $u \in L^{\infty} (\Omega)$ there exists  an approximating sequence
of piecewise constant functions in the sense of metric $d$. Consequently, minimization of the discretized regularized problem is well--posed, stable,
and convergent according to Proposition \ref{pr3.1}. The piecewise constant regularizers $\{\umn\}$  approximate the ${\cal R}$-minimizing
solution $\bu$ in the sense $\lim \umn=\bu$ in the weak star topology and  $\lim\|\umn\|_{\infty}=\|\bu\|_{\infty}$.
\end{example}
\medskip
This convergence described in the previous sentence is weaker than
convergence with respect to the metric (\ref{metric_inf}). The fact
that these two types of convergence are not equivalent, by contrast
to the $BV$ and $BD$ cases with the two corresponding convergence
types,  is shown by the following counterexample.

Consider the Rademacher functions $f_n:[0,1]\rightarrow \{-1,1\}$,
\[
f_n(t)=(-1)^{i+1}\,\,\,\mbox{if}\,\,\,x\in\left[\frac{i-1}{2^n},\frac{i}{2^n}\right),\,\,\,1\leq i\leq 2^n.
\]
This sequence converges weakly
star to zero in  $L^\infty([0,1])$, but not in the $L^1([0,1])$ norm. Moreover, consider $g_n:[0,2]\rightarrow \mathbb{R}$,
\[
g_n(t)=f_n(t),\,\,\,\mbox{if}\,\,\,t\in[0,1]
\]
and $g_n(t)=1$ for $t\in[1,2]$.
Then $\{g_n\}$ converges weakly star
to $\chi_{[1,2]}$, the characteristic function
of $[1,2]$ in  $L^\infty([0,2])$ but not in the $L^1([0,2])$ norm.
Note that $\|g_n\|_{\infty}=\|\chi_{[1,2]}\|_{\infty}=1$. Thus, $\{g_n\}$ is a sequence in $L^\infty([0,2])$ which converges
weakly star to $\chi_{[1,2]}$ and such that $\lim_{n\rightarrow\infty}\|g_n\|_{\infty}=\|\chi_{[1,2]}\|_{\infty}$, but
\mbox{$\lim_{n\rightarrow\infty}\left(\norm{g_n-\chi_{[1,2]}}{L^1} + |\|g_n\|_{\infty}-\|\chi_{[1,2]}\|_{\infty}|\right)\neq 0$.}

\begin{remark} Given a direct problem formulated in $L^{\infty}$. Is it worth to formulate the
inverse problem in $L^{\infty}$ or is more appropriate to formulate the problem in $L^2$, where
we can get also $L^2$-approximations?
$L^2$ approximations are quite advantageous because in the $L^\infty$ case, one might not even get
convergence with respect to the $L^1$-norm, as demonstrated by the above counterexample.
However, it is sometimes desirable that the regularized solutions are guaranteed to belong also to the  $L^{\infty}$ space.
Moreover, the extremal behavior of the solution can be evaluated by
$L^{\infty}$ regularization, since convergence of the $L^{\infty}$-norm  of the regularized solutions
to the $L^{\infty}$ norm of the true solution is achieved. Also, in some inverse problems
high interest is given to estimating a linear functional of the solution rather than  the solution -
according to the mollifier idea in \cite{And86}, which corresponds to the weak-star approximation of our $L^\infty$ regularization results.
\end{remark}

\section{The inverse ground water filtration problem}\label{water}

We consider the problem of recovering the diffusion coefficient in
\begin{equation*}
 -(a u_x)_x=f\,\,\,\mbox{in}\,\,\,\Omega,
\end{equation*}
\begin{equation*}
u(0)=0=u(1),
\end{equation*}
with $f\in L^2[0,1]$. The operator $F$ is defined as the parameter-to-solution mapping
\[
F:\df:=\{a\in L^{\infty}[0,1]:a(x)\geq c>0, \,a.e.\}\to L^2[0,1],
\]
\[
a\mapsto F(a):=u(a),
\]
where $u(a)$ is the unique solution of the above equation and $c$ is a constant. More details about this ill posed problem can be found in  \cite{ItoKun94} and \cite[Chapter 1]{Cha09}. Note that $\df$ is a subset of the interior of the nonnegative cone of $L^{\infty}[0,1]$. In fact  $L^{\infty}$ is the  natural function space when formulating this problem. However, to the best of our knowledge, previous literature dealing with the problem from the regularization viewpoint has usually considered $\df$ in $H^1$ which is embedded in $L^{\infty}$, mainly due to the Hilbert space setting which is enforced by using $H^1$.  The operator $F$ is Fr\'echet differentiable from $L^{\infty}$ to $L^2$ - see \cite{ItoKun94}.
Note that $\df$  is closed and convex with respect to $L^2$, so it is weakly closed in $L^2$. This implies that $\df$ is weakly$^*$ closed in $L^{\infty}$.  Similarly one can argue that the operator $F$ is sequentially  weakly$^*$-weakly  closed.

 In the sequel we are going to employ approximation operators $F_m$ as in \cite{NeuSch90}.
Let $Y_m$ be the space of linear splines on a uniform grid of $m+1$ points in $[0,1]$, which vanish at $0$ and $1$. By using the variational formulation, let $u_m(a)\in Y_m$ be the unique solution of
\[
(a(u_m)_x,v_x)_{L^2}=(f,v)_{L^2}\,\,\,\mbox{for all}\,\,\,v\in Y_m.
\]
The operators $F_m$ are defined as
\[
F_m:\df:=\{a\in L^{\infty}[0,1]:a(x)\geq c>0, \,a.e.\}\to L^2[0,1],
\]
\[
a\mapsto F_m(a):=u_m(a),
\]
Then (\ref{closeness}) holds for $\rho_m=m^{-2}$  cf. \cite[Theorems 3.2.2, 3.2.5]{Cia02}, 
\[
\|F_m(a)-F(a)\|_{L^2}=\|u_m(a)-u(a)\|_{L^2}=O( \|a\|_{L^{\infty}} \cdot m^{-2}).
\]

Then by choosing the discretization of $L^{\infty}[0,1]$ as in Section \ref{partic_nonsep}, one obtains the convergence results of the
discretized regularization method as in Example \ref{example_infty}.

\section*{Acknowledgment}
The authors thank Prof. J.B. Cooper (Kepler University, Linz) for providing them the counterexample in Section \ref{partic_nonsep}. They are  grateful to S. Pereverzev (Radon Institute, Linz) for helpful discussions, and to A. Rieder (Karlsruhe University) and V. Vasin (Institute of Mathematics and Mechanics, Ekaterinburg) for  interesting references.
Also, they acknowledge the support by the Austrian Science Fund (FWF)
within the national research networks Industrial Geometry, project
9203-N12, and Photoacoustic Imaging in Biology and Medicine, project
S10505-N20 (C. P.  and O. S.) and by an Elise Richter
scholarship, project  V82-N18 (E. R.).


\end{document}